\theoremstyle{plain}
\newtheorem{theorem}{Theorem}[section]
\newtheorem{prop}[theorem]{Proposition}
\theoremstyle{definition}
\newtheorem{obs}[theorem]{Remark}
\numberwithin{equation}{section}
\numberwithin{figure}{section}
\newtheorem{thm}{Theorem}[section]
\newtheorem{lem}[thm]{Lemma}
\newtheorem{cor}[thm]{Corollary}
\begin{document}

\baselineskip=18pt

\title[Accessible Percolation]{Accessiblility Percolation with Crossing Valleys on $n$-ary Trees}

\author[Frank Duque]{Frank Duque}
\address[Frank Duque]{Instituto de Matem\'aticas UNAM and Instituto de F\'isica UASLP, Mexico}
\email{math.frankduque@gmail.com}
\thanks{Research Partially supported by FORDECYT 265667 (Mexico)}

\author[Alejandro Roldan]{Alejandro Rold\'an-Correa}
\author[Leon Alexander Valencia]{Leon A. Valencia}
\address[Alejandro Rold\'an,  Leon Valencia]{Instituto de Matem\'aticas, Universidad de Antioquia, Colombia}
\email{alejandro.roldan@udea.edu.co}
\email{lalexander.valencia@udea.edu.co}
\thanks{Research Partially supported by Universidad de Antioquia (Colombia).}

\keywords{Percolation, Dynamics of evolution}
\subjclass[2010]{60K35, 60C05, 92D15} 
\date{\today}

%\linenumbers

\begin{abstract} 
	
In this paper we study a variation of the accessibility percolation model, this is also motivated by evolutionary biology and evolutionary computation. Consider a tree whose vertices are labeled with random numbers. We study the probability of having a monotone subsequence of a path from the root to a leaf, where any $k$ consecutive vertices in the path contain at least one vertex of the subsequence. An $n$-ary tree, with height $h$, is a tree whose vertices at distance at most $h-1$ to the root have $n$ children. For the case of $n$-ary trees, we prove that, as $h$ tends to infinity the probability of having such subsequence:  tends to 1, if $n$ grows significantly faster than $\sqrt[k]{h/(ek)}$
% ($n(h)\geq \sqrt[k]{h/(ek)}+\omega\left(\sqrt[k]{\log(h)}\right)$)
; and tends to 0, if $n$ grows significantly slower than $\sqrt[k]{h/(ek)}$
% ($n(h)\leq \sqrt[k]{h/(ek)}- \Omega(h^c) $ with $c>0$)
.

\end{abstract}

\maketitle

\section{Introduction}
\label{S: Introduction}

Let $T$ be a rooted tree (whose edges are directed from fathers to children). Assume that the vertices of $T$ are labeled with independent and identically distributed continuous random variables; given a vertex $v\in T$, we denote by $w(v)$ its label.  Let \[P=v_0\rightarrow v_1\rightarrow \cdots \rightarrow v_h\] be a path in $T$ from the root to a leaf. We say that $P$ is a $k$\textit{-accessible} if there is a subsequence  $S:=v_{r(0)},v_{r(1)},v_{r(2)},\ldots,v_{r(t)}$ of the vertices in $P$  such that: \[w({v_{r(0)}})<w({v_{r(1)}})<w({v_{r(2)}})<\cdots<w({v_{r(t)}}),\]
each $k$ consecutive vertices of $P$ contains at least one vertex in $S$, $v_0=v_{r(0)}$ and $v_h=v_{r(t)}$. In Figure~\ref{fig:kAccessiblilityInT} we illustrate a labelled tree and  its $2$-accessible paths.  We denote by $\theta_k(T)$ the probability of having a $k$-accessible path in $T$. 
\begin{figure}[htb]
	\centering
	\includegraphics[width=.6\linewidth]{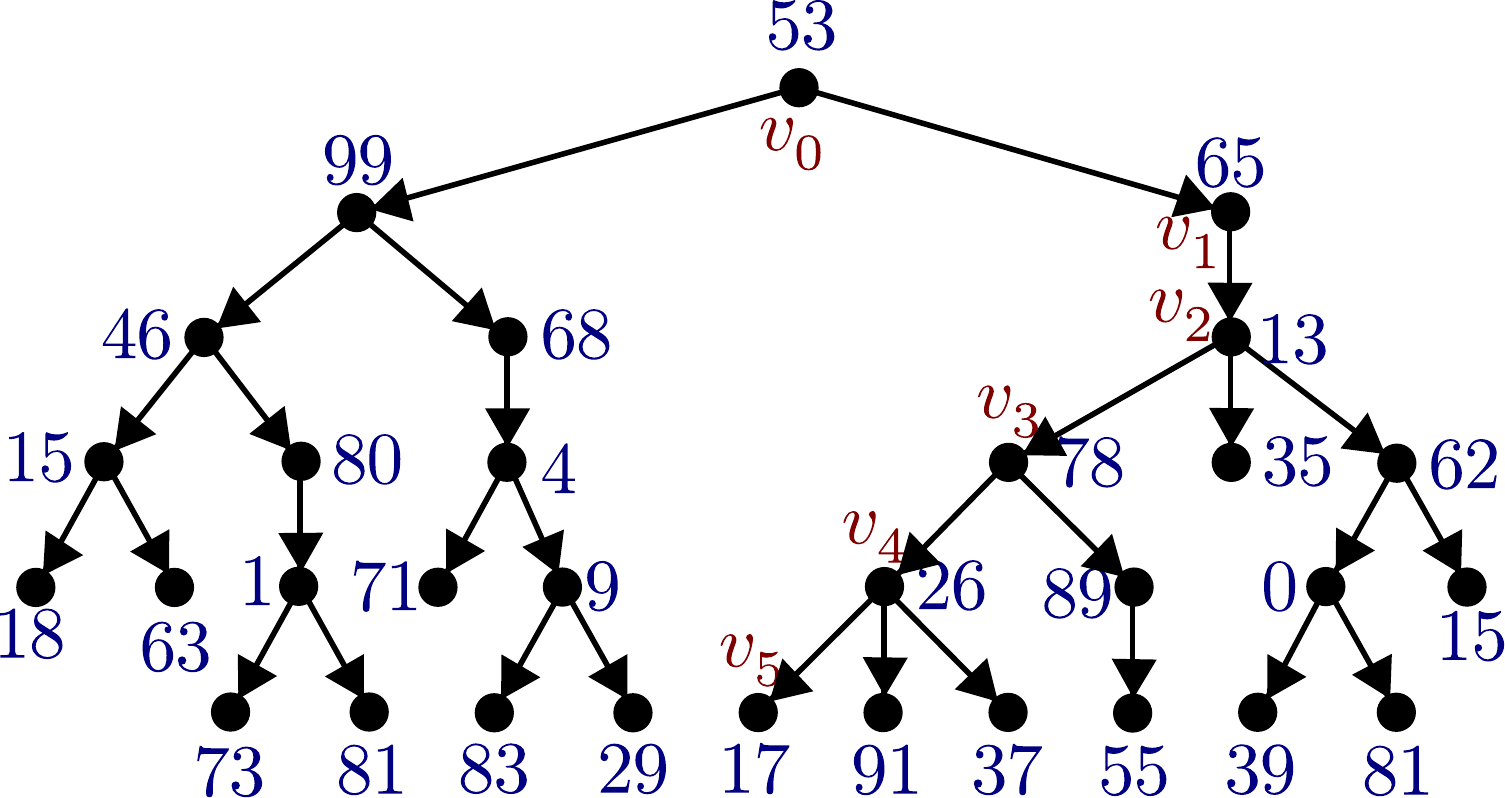}
	\caption{An example of a labelled tree without $1$-accessible paths, and for which the only $2$-accessible paths  are labeled as: $53$,$99$,$68$,$4$,$71$; and $53$,$65$,$13$,$78$,$26$,$91$.} 
	\label{fig:kAccessiblilityInT}
\end{figure}%

\subsection{Motivation.}
Fitness landscapes are by construction a static concept that assigns fitness values to the points of an underlying configuration space \cite{DefLandscapes2003}. This concept was first proposed in 1932 by Sewall Wright \cite{LandscapeDef1932} (as a mapping from a set of genotypes to fitness); since then, it has attracted particular interest in evolutionary biology and in evolutionary computation, because it offers an approach to conceptualize and visualize how an evolving population may change over time, in various population genetics models \cite{FitnessLandscape2014Book}.

For the case of an haploid asexual population on a given fitness landscape, framing by the ‘strong selection, weak mutation’ (SSWM) regime, a mutations path is considered selectively accessible if the fitness values encountered along it are monotonically increasing \cite{JKAJ2011,WDDH2006,WWC2005}. Motivated by the concept of selective accessibility, a new kind of percolation was introduced by Nowak and Krug in \cite{NK2013}, in which they studied the  probability of having a monotonically increasing path in an graph whose vertices has been labelled with random numbers \cite{NK2013,RZ2013,ApHc2016,BBS2017,Li2017}.

The model of accessibility percolation was introduced by Nowak and Krug \cite{NK2013} as follows. Imagine a population of some life form endowed with the same genetic type (genotype). If a mutation occurs, a new genotype is created which can die out or replace the old one. Provided that natural selection is sufficiently strong, the latter only happens if the new genotype has larger fitness. As a consequence, on longer timescales the genotype of the population takes a path through the space of genotypes along which the fitness is monotonically increasing \cite{G1984}.

The hypercube is a graph whose vertices are all possible $N$-tuples in $\lbrace 0,1 \rbrace^N$ (for some positive integer $N$), where two vertices are connected by an undirected edge, if the number of coordinates at which they differ is one. In many basic mathematical models of genetic mutations, the genotype sequence space is represented by the hypercube: each genome is represented as a node of the hypercube and each mutation involves the flipping of a single bit from 0 (the “wild” state) to 1 (the “mutant” state) \cite{JKAJ2011,WWC2005, Martinsson2014}. An $f$-ary (complete $f$-ary) tree with height $h$, is a rooted tree (whose edges are directed from fathers to children) whose vertices at distance at most $h-1$ to the root, have $f$ children. Being a simpler case and motivated by the hypercube case, Nowak and Krug in \cite{NK2013} study the problem of determining the growth of $f$ as a function of $h$ for having accessibility percolation in $f$-ary trees.

According to the ruggedness or smoothness of the landscape, there are some topological features (as peaks, valleys, ridges  etc.) that block the selectively accessible mutations paths towards the highest peak in the landscape. The evolutionary process that may allow escaping those topological features is known as valley crossing, which is not allowed in the selectively accessible mutation paths. However, in natural populations of sufficient size, a number of double mutants is present at all times, and the crossing valleys can be relatively facile \cite{JKAJ2011,WC2005,WDFF2009}; the SSWM assumption may therefore seem very restrictive. 

A simple way of allowing peaks and valleys on the accessibility percolation model, consists in `` allowing holes'' in the monotonicity of the path that the genotype population takes. 
In this paper, as a variation of the concept of accessibility percolation introduced by Nowak and Krug in \cite{NK2013},  we introduce the concept of $k$-accessibility percolation in which crossing small valleys is allowed. Imagine a population of some organisms  endowed with the same genotype. If a mutation occurs, a new genotype is created. The genotype die, when in the path $P$ of consecutive mutations, there is not a subsequence $S$ with increasing fitness and small holes.

\subsection{Previous results}

Given a function $f:\mathbb{Z}_+\rightarrow \mathbb{Z}_+$, we denote by $T_h(f)$ the $f(h)$-ary tree with height $h$ and we denote by $\mathcal{T}_f$ the sequence of trees $\lbrace T_h(f)\rbrace_{h\in \mathbb{Z}_+}$. We denote by $\theta_k(\mathcal{T}_f):=\lim_{h\rightarrow \infty} \theta_k (T_{h}(f))$ provided the limit exists, when $\theta_k(\mathcal{T}_f)>0$  we say that there is $k$-percolation in $\mathcal{T}_f$.

In the first work on accessibility percolation, Nowak and Krug studied the problem of determining the growth of $f$, with respect to $h$, such that there is $1$-accessibility percolation  on $\mathcal{T}_f$ \cite{NK2013}. Nowak and Krug prove that: if $f$ is a super-linear function, then there is $1$-accessibility percolation in $\mathcal{T}_f$; and if $f$ is a sub-linear function, then there is not $1$-accessibility percolation in $\mathcal{T}_f$. They also studied the linear case, $f(h)=\alpha h$, establishing the existence of a threshold between $e^{-1}$ and $1$ on the scaling constant factor; there is $1$-accessibility percolation in $\mathcal{T}_f$ when $\alpha>1$, and there is not $1$-accessibility percolation in $\mathcal{T}_f$ when $\alpha<e^{-1}$. See Theorem~\ref{thm:NK} 

\begin{thm}[Nowak-Krug~\cite{NK2013}] \label{thm:NK}
$\theta_1 (\mathcal{T}_f)>0$, if $f(h)\geq  \left\lfloor \alpha h \right\rfloor $ for $h$ large enough and $\alpha>1$. $\theta_1 (\mathcal{T}_f)=0$, if $f(h)\leq  \left\lfloor \alpha h \right\rfloor$  for $h$  large enough and $\alpha \leq e^{-1}$.
% 	\[\theta_1 (\mathcal{T}_f)
% 	\begin{cases}
% 	>0,       & \quad \text{if } f(h)\geq  \left\lfloor \alpha h \right\rfloor \text{ for $h$  large enough and } \alpha>1,\\
% 	=0,       & \quad \text{if } f(h)\leq  \left\lfloor \alpha h \right\rfloor \text{ for $h$  large enough and } \alpha \leq e^{-1}. \\
% 	\end{cases}  \]\\
\end{thm}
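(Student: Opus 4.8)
The plan is to work with the equivalent order-statistics formulation: since the labels are i.i.d.\ continuous, we may assume they are uniform on $[0,1]$, and only their relative order matters. For $k=1$ a path is $1$-accessible precisely when its labels are strictly increasing, so $\theta_1(T_h(f))$ is the probability that some root-to-leaf path has increasing labels; let $N$ denote the number of such increasing paths. A preliminary monotonicity remark reduces both statements to the exact linear case: if $f(h)\ge \lfloor \alpha h\rfloor$ then $T_h(f)$ contains the $\lfloor\alpha h\rfloor$-ary tree of height $h$ as a subtree, so $\theta_1(T_h(f))\ge \theta_1(T_h(\lfloor\alpha h\rfloor))$, and symmetrically for the upper bound; hence it suffices to treat $f(h)=\lfloor\alpha h\rfloor$, with $f\sim\alpha h$.

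For the subcritical statement ($\alpha\le e^{-1}$) I would use the first moment. A fixed root-to-leaf path has $h+1$ vertices, so the probability that its labels increase is $1/(h+1)!$, and since there are $f^h$ such paths, $\mathbb{E}[N]=f^h/(h+1)!$. With $f\le\alpha h$ and the bound $(h+1)!\ge (h+1)\sqrt{2\pi h}\,(h/e)^h$ one obtains $\mathbb{E}[N]\le (\alpha e)^h/\big((h+1)\sqrt{2\pi h}\big)$, which tends to $0$ for every $\alpha\le e^{-1}$: the factor $(\alpha e)^h\le1$ is killed by the polynomial denominator. Markov's inequality then gives $\theta_1(T_h(f))=\mathbb{P}(N\ge1)\le\mathbb{E}[N]\to0$.

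The supercritical statement ($\alpha>1$) is the main point, and here I would apply the second moment method after conditioning the root label to a small value $x$. Writing $N_x$ for the number of increasing paths given root label $x$, the same computation gives $\mathbb{E}[N_x]=f^h(1-x)^h/h!$. For the second moment I would decompose $\mathbb{E}[N_x^2]$ according to the depth $d$ of the last common vertex of an ordered pair of paths: the two branches below that vertex lie in disjoint subtrees and are conditionally independent given the label $y$ of the branch vertex. Counting $(f-1)f^{2h-d-1}$ such pairs and integrating out $y$ produces a Beta integral, so that (taking $x=0$ for transparency) the contribution of depth $d=h-m$, normalized by $\mathbb{E}[N_0]^2$, is $r_m:=\binom{2m}{m}\,(h!)^2\,f^{m-h}/(h+m)!$.

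The crux is to show $\sum_m r_m=O(1)$ uniformly in $h$. Setting $\beta=m/h$ and applying Stirling, the $\log h$ terms cancel and $\tfrac1h\log r_m\to\phi(\beta):=\beta\log4+\beta-1-(1+\beta)\log(1+\beta)+(\beta-1)\log\alpha$. Since $\phi$ is concave with $\phi(1)=0$ and $\phi'(1)=\log(2\alpha)$, the hypothesis $\alpha>1$ (indeed any $\alpha>\tfrac12$) forces $\phi'(1)>0$, so $\phi\le0$ on $[0,1]$ with maximum attained only at $\beta=1$. The dominant contribution thus comes from $m$ close to $h$, i.e.\ from pairs that split just below the root and are essentially independent, where $r_m\approx(2\alpha)^{-(h-m)}$ gives a geometric tail summing to a finite $C(\alpha)$. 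Hence $\mathbb{E}[N_0^2]\le C(\alpha)\,\mathbb{E}[N_0]^2$, and by continuity the same holds for all small $x$; Paley--Zygmund then yields $\mathbb{P}(N_x\ge1)\ge 1/C(\alpha)>0$ uniformly in $h$, and integrating over $x\in[0,\epsilon]$ gives $\theta_1(T_h(f))\ge\epsilon/C(\alpha)$, whence $\theta_1(\mathcal{T}_f)>0$. I expect the main obstacle to be precisely this correlation sum: controlling the pairs of paths that share a long common prefix so that they do not overwhelm $\mathbb{E}[N]^2$, which is where the threshold constant enters and where the Stirling asymptotics must be handled carefully.
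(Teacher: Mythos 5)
The paper never proves Theorem~\ref{thm:NK}: it is imported verbatim from Nowak--Krug \cite{NK2013} and used only as background (the paper's own arguments start from Proposition~\ref{prop:nk}, which rests on the Roberts--Zhao theorem instead). So the comparison must be with the classical argument, and your proposal is essentially that argument: first moment for $\alpha\le e^{-1}$, second moment with the root label conditioned small for $\alpha>1$. Your subcritical half is complete and correct as written. In the supercritical half the skeleton checks out --- the pair count $(f-1)f^{2h-d-1}$, the Beta integral $\int_0^1\frac{y^{d-1}}{(d-1)!}\bigl(\frac{(1-y)^m}{m!}\bigr)^2dy=\binom{2m}{m}/(h+m)!$ with $m=h-d$, hence $r_m=\binom{2m}{m}(h!)^2f^{m-h}/(h+m)!$, and indeed $\phi(1)=0$, $\phi'(1)=\log(2\alpha)$ --- but two steps are softer than they should be. First, the pointwise limit $\tfrac1h\log r_m\to\phi(\beta)$ with $\phi<0$ on $[0,1)$ does not by itself bound $\sum_m r_m$ uniformly in $h$; the clean fix is a ratio test, which moreover shows that for $\alpha>1$ you never need the large-deviation analysis at all: $r_{m-1}/r_m=\frac{m(h+m)}{2(2m-1)f}\le\frac{h+m}{2f}\le\frac{h}{f}=\frac1\alpha+o(1)<1$, so the whole sum is dominated by a single geometric series (your sharper ratio $1/(2\alpha)$ only matters if you want the true reach of the method, $\alpha>1/2$). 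Second, ``by continuity the same holds for all small $x$'' hides a computation: conditioning the root at $x$ multiplies the depth-$d$ term by $(1-x)^{-d}$, i.e.\ $r_m(x)=r_m(1-x)^{-(h-m)}$, so the ratio bound becomes $\le h/\bigl(f(1-x)\bigr)$, which stays uniformly below $1$ for $x\le\epsilon$ with $\epsilon=\epsilon(\alpha)$ small enough; this is what actually justifies applying Paley--Zygmund uniformly on $[0,\epsilon]$ and concluding $\theta_1(T_h(f))\ge\epsilon/C(\alpha)$. You should also dispose of the diagonal term, $\mathbb{E}[N_x]/\mathbb{E}[N_x]^2=1/\mathbb{E}[N_x]\to0$, valid since $(1-\epsilon)\alpha e>1$. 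These are routine repairs, not conceptual gaps: with them your proof is sound and in fact slightly stronger than the statement.
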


Continuing with the work of Nowak and Krug, Roberts and Zhao \cite{RZ2013} determined that $\alpha=e^{-1}$ is a threshold on the scaling constant factor for $1$-accessibility percolation on $\mathcal{T}_f$, see Theorem~\ref{thm:NK1p} and Theorem~\ref{thm:NK1perc}. 

\begin{thm}[Roberts-Zhao \cite{RZ2013}]\label{thm:NK1p}  
	\[\theta_1 (\mathcal{T}_f)=
	\begin{cases}
	1,       & \quad \text{if } f(h)\geq  \left\lfloor \alpha h \right\rfloor \text{ for $h$ large enough and } \alpha>e^{-1},\\
	0,       & \quad \text{if } f(h)\leq  \left\lfloor \alpha h \right\rfloor \text{ for $h$ large enough and } \alpha \leq e^{-1}.\\
	\end{cases}  \]
\end{thm}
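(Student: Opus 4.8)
The plan is to reduce to independent uniform labels and to control $\theta_1$ through the first and second moments of the number of strictly increasing root-to-leaf paths. Since only the relative order of the labels matters, I may assume each $w(v)$ is uniform on $[0,1]$ and, as is standard in this model, that the root carries the reference value $0$; for $k=1$ the subsequence $S$ must contain every vertex, so a $1$-accessible path is exactly a strictly increasing path from the root to a leaf. Let $X$ be the number of such paths. Because adding children can only create more increasing paths, $\theta_1(T_h(f))$ is monotone in $f(h)$ and it suffices to treat $n=\lfloor\alpha h\rfloor$. A fixed root-to-leaf path has its $h$ non-root labels in increasing order with probability $1/h!$, so the first moment is $\mathbb{E}[X]=n^h/h!$; more generally a subtree of height $h'$ whose root has value $y$ carries $n^{h'}(1-y)^{h'}/h'!$ increasing paths in expectation.

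For $\alpha\le e^{-1}$ I would use only the first moment. With $n\le\alpha h$, Stirling's formula gives $\mathbb{E}[X]\le(\alpha h)^h/h!\sim(\alpha e)^h/\sqrt{2\pi h}$, which tends to $0$: exponentially when $\alpha<e^{-1}$, and like $1/\sqrt{2\pi h}$ at the boundary $\alpha=e^{-1}$. Hence $\theta_1(T_h(f))=\mathbb{P}(X\ge 1)\le\mathbb{E}[X]\to 0$ and $\theta_1(\mathcal{T}_f)=0$; note this already settles the delicate boundary value $\alpha=e^{-1}$.

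For $\alpha>e^{-1}$ I would proceed in two steps. Step one is to show $\mathbb{P}(X\ge 1)\ge\delta>0$ uniformly in $h$ by a second moment estimate. Expanding $\mathbb{E}[X^2]$ as a sum over ordered pairs of paths grouped by the depth $j$ of their lowest common ancestor, a Beta-integral computation gives $\mathbb{E}[X^2]/\mathbb{E}[X]^2=1+\sum_{j\ge 1}R_j$ with $R_j\approx(2\alpha)^{-j}$ when $j\ll h$. The crux is that this geometric series converges only for $\alpha>1/2$, so the plain second moment cannot reach the sharp constant $e^{-1}$: paths sharing a short initial segment are too strongly correlated. To fix this I would apply the method not to all increasing paths but to the restricted count of paths whose label at depth $i$ lies in a narrow window about the typical profile $i/h$; forcing this near-linear profile suppresses precisely the dangerous short-overlap pairs while keeping a $(1-o(1))$ fraction of the first moment, and lets the argument run for every $\alpha>e^{-1}$, uniformly for all sufficiently small root values. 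Step two amplifies this: the root has $n$ children rooting independent subtrees of height $h-1$, and a proportion $\Theta(n)$ of them take small values, so a single success suffices and $\theta_1(T_h)\ge 1-(1-\beta)^{n}\to 1$ for a fixed $\beta>0$ since $n\to\infty$. Together these give $\theta_1(\mathcal{T}_f)=1$.

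The main obstacle is Step one. The naive variance bound degrades exactly in the window $e^{-1}<\alpha\le 1/2$, and the delicate point is to calibrate the truncation to the typical profile so that it is simultaneously strong enough to kill the short-overlap correlations and weak enough to retain almost all of $\mathbb{E}[X]$; this balance is the technical heart of the result. Once a uniform positive lower bound is secured, the amplification in Step two is comparatively routine.
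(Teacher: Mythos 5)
This theorem is not proved in the paper at all --- it is quoted from Roberts and Zhao \cite{RZ2013} and used as a black box (via Proposition~\ref{prop:nk}), so there is no internal proof to compare against; your proposal must be judged against the known proof strategy for this result. On that score your architecture is essentially the right one, and it matches the original route: normalizing the root label to $0$ (this convention is in fact \emph{necessary} for the $0$--$1$ dichotomy as stated, since a uniform root label would cap $\theta_1$ at $1-1/(e\alpha)$); a first-moment bound $\mathbb{E}[X]\le (\alpha h)^h/h!\sim (\alpha e)^h/\sqrt{2\pi h}$ that correctly kills the subcritical case \emph{including} the boundary $\alpha=e^{-1}$; and, for $\alpha>e^{-1}$, a second moment applied to a profile-restricted path count followed by amplification over the root's $n\to\infty$ children. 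Your diagnosis of why the plain second moment fails is also quantitatively correct: the overlap-$j$ ratio is $R_j\approx(2\alpha)^{-j}$, so the unrestricted method stalls at $\alpha=1/2$, and constraining labels near the linear profile improves this to roughly $R_j\approx(\alpha e)^{-j}$ (since $n^j$ times the probability that a length-$j$ prefix is increasing with labels of order $j/h$ is about $(\alpha e)^j$), which converges precisely when $\alpha>e^{-1}$.

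The genuine gap is that this restricted second-moment estimate --- which you yourself identify as ``the technical heart of the result'' --- is asserted rather than carried out, and it is the entire content of the hard direction. Two specific points would need repair in an execution. First, your claim that the windowed count retains a $(1-o(1))$ fraction of $\mathbb{E}[X]$ is both doubtful and unnecessary: windows narrow enough to suppress the short-overlap correlations near $\alpha=e^{-1}$ typically retain only a constant (or subexponentially small) fraction of the mean, and all the method requires is $\mathbb{E}[Y^2]\le C\,\mathbb{E}[Y]^2$ for the restricted count $Y$. Second, the dangerous pairs are prefixes with \emph{atypically small} labels (they inflate the number of continuations), so the binding constraint is one-sided --- labels not too far \emph{below} the profile --- and the standard fix restricts labels beneath a slightly supercritical linear profile rather than a symmetric window about $i/h$; either formulation can be made to work, but the Beta-integral/Stirling computation verifying the geometric decay of $R_j$, uniformly over all sufficiently small root values $y$ (so that the effective parameter $\alpha(1-y)$ stays above $e^{-1}$ and the amplification $\theta_1\ge 1-\mathbb{E}\bigl[(1-\delta)^{\mathrm{Bin}(n,\epsilon)}\bigr]\to 1$ goes through), must actually be done. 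As written, the proposal is a correct and well-calibrated plan, not a proof.
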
 

\begin{thm} [Roberts-Zhao \cite{RZ2013}]\label{thm:NK1perc}
If $f(h)=\left( \frac{1+\beta_h}{e} \right)h$ where $\beta_h \rightarrow 0$ as $h \rightarrow \infty$, then 
\begin{equation*}
 \theta_1 (\mathcal{T}_f)= 
	\begin{cases}
	1,      & \quad \text{if } h\beta_h/\log h \rightarrow \infty. \\
	0,      & \quad \text{if } \log h -2h\beta_h \rightarrow \infty,\\
	\end{cases}  
\end{equation*} 
\end{thm}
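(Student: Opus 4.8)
The plan is to work with the random variable $N$ counting the $1$-accessible (i.e.\ strictly increasing) root-to-leaf paths, so that $\theta_1(T_h(f))=\Pr[N\ge 1]$, and to treat the two regimes by the first- and second-moment methods respectively. Throughout write $f=f(h)=\frac{(1+\beta_h)h}{e}$ and recall that each root-to-leaf path has $h+1$ vertices, so a fixed path is increasing with probability $1/(h+1)!$ and there are $f^h$ of them. First the vanishing regime: since $\mathbb{E}[N]=f^h/(h+1)!$, Stirling's formula gives
$$\mathbb{E}[N]=\frac{(1+\beta_h)^h}{(h+1)\sqrt{2\pi h}}\,(1+o(1))=\frac{e^{h\beta_h}}{\sqrt{2\pi}\,h^{3/2}}\,(1+o(1)),$$
using $\log(1+\beta_h)=\beta_h+O(\beta_h^2)$ and $h\beta_h^2\to 0$ in the window of interest. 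Under $\log h-2h\beta_h\to\infty$ we have $h\beta_h\le\tfrac12\log h-\omega(1)$, whence $\mathbb{E}[N]\to 0$ and Markov's inequality $\Pr[N\ge1]\le\mathbb{E}[N]$ yields $\theta_1(\mathcal{T}_f)=0$. (In fact the first moment already vanishes under the weaker requirement $h\beta_h-\tfrac32\log h\to-\infty$.) This is the easy half.

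For the percolating regime the natural tool is the second moment $\Pr[N\ge1]\ge(\mathbb{E} N)^2/\mathbb{E}[N^2]$, and I would first set up $\mathbb{E}[N^2]$ by splitting over the depth $j$ of the last common ancestor of an ordered pair of paths. Two distinct paths whose last common ancestor sits at depth $j$ share an increasing prefix of length $j+1$ and then branch into two independent increasing continuations of length $h-j$; conditioning on the value at the branch vertex and evaluating a Beta integral gives the clean pattern probability $g_j=\binom{2(h-j)}{h-j}/(2h-j+1)!$, while the number of ordered pairs with last common ancestor at depth $j$ is $(f-1)f^{2h-j-1}$ for $0\le j\le h-1$ (plus the diagonal $f^h$ pairs with $j=h$). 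Hence
$$\mathbb{E}[N^2]=\sum_{j=0}^{h-1}(f-1)f^{2h-j-1}\,\frac{\binom{2(h-j)}{h-j}}{(2h-j+1)!}+\frac{f^h}{(h+1)!}.$$

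Here lies the main obstacle, and it is a genuine one. Comparing with $(\mathbb{E} N)^2=f^{2h}/((h+1)!)^2$, the $j=0$ term alone already contributes a ratio $\tfrac{f-1}{f}\binom{2h}{h}\tfrac{((h+1)!)^2}{(2h+1)!}\sim h/2$, and the successive terms do not decay — their consecutive ratios tend to a constant $\approx e/2>1$ — so $\mathbb{E}[N^2]/(\mathbb{E} N)^2$ diverges. Thus the plain second moment, and even the variant obtained by conditioning the root value to be $0$, fails: increasing paths are strongly positively correlated and pairs sharing a long prefix dominate. The way I would repair this is a truncated second moment, restricting attention to the sub-collection $\tilde N$ of increasing paths whose value profile stays inside a narrow corridor $U_{v_i}\in[\ell_i,r_i]$ around the typical (essentially linear) increasing profile, chosen so that the first moment survives, $\mathbb{E}[\tilde N]\to\infty$, while the corridor forbids two paths from simultaneously hugging the boundary after a long shared prefix, which suppresses the $g_j$-weights enough that $\mathbb{E}[\tilde N^2]\le(1+o(1))(\mathbb{E}\tilde N)^2$.

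Verifying these two competing requirements at once is the technical heart of the argument, and it is exactly here that the hypothesis $h\beta_h/\log h\to\infty$ enters: it forces $\mathbb{E}[N]=e^{h\beta_h}/\mathrm{poly}(h)$ to grow faster than any power of $h$, leaving room to absorb the polynomial losses incurred by the truncation, after which the second moment on $\tilde N$ gives $\Pr[\tilde N\ge1]\to1$ and hence $\theta_1(\mathcal{T}_f)=1$. The appearance of the $\tfrac12\log h$ and $\log h$ thresholds reflects logarithmic (Bramson-type) corrections, which suggests an alternative route I would keep in reserve as a cross-check: analyze the exact recursion $q_h(y)=1-\bigl(1-\int_y^1 q_{h-1}(z)\,dz\bigr)^{f}$ for the conditional percolation probability $q_h(y)$ and the location of its front, treating the existence of an increasing path as a branching-random-walk survival problem.
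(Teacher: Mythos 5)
Your attempt is for a statement the paper itself never proves: Theorem~\ref{thm:NK1perc} is quoted from Roberts--Zhao \cite{RZ2013} and used as a black box (repackaged as Proposition~\ref{prop:nk}), so the comparison can only be against the cited argument and the merits of your sketch. Your $0$-direction is correct and complete: $\mathbb{E}[N]=f^h/(h+1)!$, Stirling, and Markov's inequality do exactly what you say. One small repair: do not expand $\log(1+\beta_h)$ --- your hedge ``$h\beta_h^2\to 0$ in the window of interest'' is not guaranteed, since the hypothesis $\log h-2h\beta_h\to\infty$ permits $\beta_h$ negative with $|\beta_h|\gg h^{-1/2}$; instead use $(1+\beta_h)^h\le e^{h\beta_h}$, which holds unconditionally and gives the needed upper bound. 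Your side remark is also right that under this paper's normalization (the root is labelled, so a path carries $h+1$ values) the first moment vanishes already when $h\beta_h-\tfrac32\log h\to-\infty$; the constant $\tfrac12$ in the theorem reflects the convention in which a path carries $h$ relevant labels, where the first-moment threshold sits exactly at $\tfrac12\log h$, and under either convention the stated $0$-case follows.

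The $1$-direction is where there is a genuine gap: what you give is a correct diagnosis plus a program, not a proof. Your failure analysis of the plain second moment is sound --- the $j=0$ ratio $\sim h/2$ and the consecutive ratios tending to $e/2$ are right, and in fact the damage is worse than polynomial: at overlap $j=(2-4/e+o(1))h$ the ratio $\mathbb{E}[N^2]/(\mathbb{E}N)^2$ is of order $e^{ch}$ with $c=4/e-\log 4>0$. The proposed repair (a truncated second moment over paths confined to a corridor around the linear profile) is the standard and workable strategy, and in essence the shape of the cited lower-bound argument; but you never specify the corridor $[\ell_i,r_i]$, never prove that the truncated first moment survives with only polynomial loss (a ballot-type estimate), and never prove the decisive point, namely that the corridor suppresses the overlap weights \emph{exponentially} in the shared-prefix length, which is what must beat the $e^{ch}$ overcount at $j=\Theta(h)$; the hypothesis $h\beta_h/\log h\to\infty$ can only absorb the residual polynomial factors. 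You flag this yourself as ``the technical heart,'' and it is exactly the content of the theorem; until those two estimates are carried out, $\theta_1(\mathcal{T}_f)=1$ is not established, and your reserve route via the recursion $q_h(y)$ is likewise only named, not executed. Verdict: first half proved (modulo the cosmetic fix above), second half an outline of the right shape with the key estimates missing.
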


Accessibility percolation has been also studied,  recently, for the case when the underlying graph is the hypercube; see \cite{Martinsson2014,martinsson2015,ApHc2016,BBS2017,Li2017,BBS2017}. 

% Accessibility percolation has been also studied,  recently, for the case when the underlying graph is the hypercube; in this case it is assumed that the nodes are labelled with i.i.d. random variables, except for $(1,1,\ldots, 1)$ that is labeled by $1$ and $(0,0,\ldots, 0)$ that is labelled by a deterministic number $x$. Some results about accessibility percolation on the hypercube where obtained in \cite{Martinsson2014,martinsson2015,ApHc2016,BBS2017,Li2017,BBS2017}
% % 
% Accessibility percolation has been also studied,  recently, for the case when the underlying graph is the hypercube; in this case it is assumed that the nodes are labelled with i.i.d. random variables, except for $(1,1,\ldots, 1)$ that is labeled by $1$ and $(0,0,\ldots, 0)$ that is labelled by a deterministic number $x$.  Berestycki, Brunet and Shi studied (in \cite{ApHc2016}) the number of accessibility paths and (in \cite{BBS2017})  the existence of a threshold for accessibility percolation in the hypercube. Li Li in \cite{Li2017} complemented the work in \cite{BBS2017} by determining the threshold  for accessibility percolation in the hypercube.

\subsection{Our results.}

We denote by $\Omega(g)$ the functions $t$ such that $\lim_{h\rightarrow\infty}(t(h)/g(h))\geq k>0$.
We denote by $\omega(g)$ the functions $t$ such that $\lim_{h\rightarrow\infty}(t(h)/g(h)) = \infty$ (for a more formal definition of $\Omega$ and $\omega$ the reader may change $\lim_{h\rightarrow\infty}$ by $\liminf_{h\rightarrow\infty}$).  
Therefore $\omega\left(\sqrt[k]{\log(h)}\right)$ denotes a function $t$ such that, for any constant $C>0$, $t(h)\geq C\sqrt[k]{\log(h)}$ for $h$ large enough; similarly $\Omega(h^c)$ denotes a function that, for some constants $c,C>0$, it is above $Ch^c$ for $h$ large enough.

In this paper we  determine the growth of $f$, as a function of $h$, for which there is $k$-accessibility percolation on $\mathcal{T}_f$. Our main result is the following.

\begin{thm}\label{thm:main}
Let $\mathcal{T}_f$ be the sequence of $n$-ary trees $\lbrace T_h(f)\rbrace_{h\in \mathbb{Z}_+}$. Then
\begin{equation} \label{eq:mainthm}
 \theta_k(\mathcal{T}_f)=
	\begin{cases}
	1,       & \quad \text{if } f(h)\geq  \sqrt[k]{h/(ek)}+\omega\left(\sqrt[k]{\log(h)}\right), \\
	0,       & \quad \text{if } f(h)\leq  \sqrt[k]{h/(ek)}- \Omega(h^c) \text{ and } c>0.\\
	\end{cases} 
\end{equation}
\end{thm}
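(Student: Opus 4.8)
The plan is to compute the expected number of $k$-accessible paths and apply first- and second-moment methods, adapting the classical approach of Nowak--Krug and Roberts--Zhao. The key preliminary step is to understand the probability that a \emph{single} fixed root-to-leaf path $P = v_0 \to v_1 \to \cdots \to v_h$ is $k$-accessible. Since the labels are i.i.d.\ continuous, only the relative order matters, so this probability $p_k(h)$ depends on $h$ and $k$ alone. I would first derive $p_k(h)$ by a careful combinatorial or recursive analysis: a $k$-accessible path requires an increasing subsequence $S$ containing both endpoints such that every window of $k$ consecutive vertices meets $S$; equivalently, consecutive chosen indices $r(i)$ differ by at most $k$. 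Conditioning on the positions of $S$ and on the event that the selected labels are increasing, one obtains a sum over admissible index sets of the reciprocal factorials of the chosen sizes. I expect the dominant contribution to give $p_k(h) \approx (k/h)^{?}$ up to the factorial terms, and the asymptotics should yield $p_k(h) \sim C \big(ek/h\big)^{h/k}$ in the relevant scaling, since the "best" selection uses roughly $h/k$ vertices each contributing a factor like $1/(\text{block length})!$ and a factor accounting for the increasing constraint. Getting the exponent and the base exactly right is essential because the threshold $\sqrt[k]{h/(ek)}$ comes precisely from balancing $p_k(h)$ against the number of leaves.

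Once $p_k(h)$ is pinned down, the first-moment (upper) bound is direct: the number of leaves of $T_h(f)$ is $f(h)^h$, so by linearity and Markov's inequality $\theta_k(T_h(f)) \le f(h)^h\, p_k(h)$. Substituting the asymptotic form of $p_k(h)$, the product $f(h)^h p_k(h)$ tends to $0$ precisely when $f(h)^k \cdot ek/h \to 0$ fast enough, i.e.\ when $f(h) \le \sqrt[k]{h/(ek)} - \Omega(h^c)$; here the polynomial gap $\Omega(h^c)$ provides the room needed to overcome the subexponential correction factors $C$ and drive the expectation to $0$. This establishes the $\theta_k = 0$ half of \eqref{eq:mainthm}.

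For the complementary $\theta_k = 1$ statement I would use the second-moment method, which is the technically demanding part. Letting $X$ denote the number of $k$-accessible root-to-leaf paths, it suffices to show $\mathbb{E}[X^2]/(\mathbb{E}[X])^2 \to 1$. The crux is estimating the covariance contributions from pairs of paths that branch from the root at some level $j$ and are then disjoint: one must bound the joint probability that both paths are $k$-accessible given that they share the prefix $v_0,\dots,v_j$. This requires controlling how the shared increasing prefix of length $j$ constrains the labels, and summing the resulting correlation terms over all branch levels $j$ against the number of such pairs. The main obstacle is that the $k$-accessibility event is not a simple monotone path event but involves the sliding-window combinatorics, so the conditional probabilities do not factor cleanly; I anticipate needing a delicate decomposition of $p_k(h)$ that tracks the label (or rank) of the vertex at the branch point, analogous to the rank-conditioning arguments in Roberts--Zhao, together with Stirling estimates to show that the extra additive term $\omega(\sqrt[k]{\log h})$ in $f(h)$ makes $\mathbb{E}[X] \to \infty$ fast enough that the branch-point correlations are negligible. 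Making the variance estimate sharp enough to match the logarithmic-order additive threshold, rather than merely a constant-factor threshold, is where the heaviest work will lie.
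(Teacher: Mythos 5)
Your proposal is a plan rather than a proof, and it contains one concretely false intermediate claim on which the sharp threshold would hinge. You assert $p_k(h)\sim C\,(ek/h)^{h/k}$ with $C$ constant, the idea being that the dominant contribution comes from subsequences of minimal length $h/k$ with all gaps equal to $k$. That is wrong for $k\geq 2$. Writing the union bound $p_k(h)\leq \sum_{t} c_k(h,t)/(t+1)!$, where $c_k(h,t)$ counts compositions of $h$ into $t$ parts in $\{1,\dots,k\}$, a saddle-point analysis of the generating function $\sum_t c_k(h,t)x^t=[z^h]\bigl(1-x(z+\cdots+z^k)\bigr)^{-1}$ shows the sum is dominated by subsequences of length $h/k+\Theta\bigl(h^{(k-1)/k}\bigr)$ (i.e.\ using many gaps strictly shorter than $k$), giving $p_k(h)=(ek/h)^{h/k}\exp\bigl(\Theta(h^{1-1/k})\bigr)$ up to polynomial factors. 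This stretched-exponential correction is \emph{exactly} the same order as the effect of an additive constant shift in $f$, since with $A=\sqrt[k]{h/(ek)}$ one has $(A+\Delta)^h=A^h\exp\bigl((1+o(1))\Delta(ek)^{1/k}h^{1-1/k}\bigr)$. So calling the correction a constant is not cosmetic: without an explicit upper bound $p_k(h)\leq (ek/h)^{h/k}e^{O(h^{1-1/k})}$, your first-moment argument does not close; with it, the $\Omega(h^c)$ room does suffice (as $h^{c+1-1/k}\gg h^{1-1/k}$), but that bound is the actual content of the zero side and you have not derived it. On the one side, you correctly identify that the second-moment computation for the sliding-window event is the crux, but you offer no mechanism to handle the fact that two paths sharing a prefix can use \emph{different} increasing subsequences on that prefix; naming the obstacle and citing Roberts--Zhao's rank conditioning is not a proof sketch, it is a statement of the open difficulty.

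For comparison, the paper sidesteps all moment computations by reducing to the known $k=1$ threshold. It observes (Remark~\ref{def1}) that $\theta_k(T)=\theta_1(T^k)$ for the $k$-transitive closure $T^k$, and then sandwiches between genuine trees. For the one side, restricting $T^k$ to the levels divisible by $k$ embeds a tree of height $\lfloor h/k\rfloor$ with branching at least $f(h)^k\geq h/(ek)+\omega(\log h)$, and Proposition~\ref{prop:nk} (Roberts--Zhao) applies directly; the cross term $kA^{k-1}\,\omega\bigl((\log h)^{1/k}\bigr)=\omega\bigl(h^{(k-1)/k}(\log h)^{1/k}\bigr)\gg\log h$ is what makes the $\omega\bigl(\sqrt[k]{\log h}\bigr)$ margin the right one. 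For the zero side, since $T^k$ is not a tree, the paper unfolds it into a tree $H^k$ by an inductive cut-and-copy construction together with a monotonicity lemma $\theta_1(H_{i+1})\geq\theta_1(H_i)$, and then dominates $H^k$ by a tree of height $h/k$ with branching $h/(ek)$ via the degree bound $\sum_{j=1}^k f(h)^j=\frac{f(h)^{k+1}-1}{f(h)-1}\leq A^k$; note that this inequality is precisely where the lower-order terms $f^{k-1}+\cdots+f$ (the same phenomenon as your missing $e^{\Theta(h^{1-1/k})}$ correction) get absorbed by the $\Omega(h^c)$ gap. If you wish to pursue your route, both the saddle-point upper bound and a full Roberts--Zhao-style second moment for the windowed event must be carried out; the reduction is substantially shorter.
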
 

The reader may note that, from Theorem~\ref{thm:main} it follows that 

\begin{cor}
\[\theta_k(\mathcal{T}_f)=
	\begin{cases}
	1,       & \quad \text{if } f(h)\geq \sqrt[k]{h/(ek)}c \text{ for $h$ large enough and } c>1, \\
	0,       & \quad \text{if } f(h)\leq \sqrt[k]{h/(ek)}c \text{ for $h$ is large enough and } c<1.
	\end{cases}  \]
\end{cor}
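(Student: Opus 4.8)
The plan is to deduce the corollary directly from Theorem~\ref{thm:main} by rewriting the multiplicative gap $c$ as an additive perturbation of the critical value $\sqrt[k]{h/(ek)}$, and then checking that this perturbation lands in the regime covered by the appropriate branch of the theorem. The only substantive observation is that a constant multiplicative factor bounded away from $1$ creates an additive term of order $h^{1/k}$, which dominates $\sqrt[k]{\log h}$ on the supercritical side and is itself of order $h^{1/k}$ on the subcritical side.

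For the first case I would assume $f(h)\geq c\sqrt[k]{h/(ek)}$ for $h$ large with $c>1$, and split
\[
c\sqrt[k]{h/(ek)} = \sqrt[k]{h/(ek)} + (c-1)\sqrt[k]{h/(ek)}.
\]
Since
\[
\frac{(c-1)\sqrt[k]{h/(ek)}}{\sqrt[k]{\log h}} = (c-1)(ek)^{-1/k}\left(\frac{h}{\log h}\right)^{1/k}\longrightarrow\infty,
\]
the excess $(c-1)\sqrt[k]{h/(ek)}$ belongs to $\omega\!\left(\sqrt[k]{\log h}\right)$. Hence $f(h)\geq \sqrt[k]{h/(ek)}+\omega\!\left(\sqrt[k]{\log h}\right)$, and the first branch of Theorem~\ref{thm:main} gives $\theta_k(\mathcal{T}_f)=1$.

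For the second case I would assume $f(h)\leq c\sqrt[k]{h/(ek)}$ for $h$ large with $c<1$, and split
\[
c\sqrt[k]{h/(ek)} = \sqrt[k]{h/(ek)} - (1-c)\sqrt[k]{h/(ek)}.
\]
Here $(1-c)\sqrt[k]{h/(ek)} = (1-c)(ek)^{-1/k}\,h^{1/k}$ is of exact order $h^{1/k}$, so it belongs to $\Omega(h^{1/k})$ with exponent $1/k>0$. Thus $f(h)\leq \sqrt[k]{h/(ek)}-\Omega(h^{1/k})$, and the second branch of Theorem~\ref{thm:main} gives $\theta_k(\mathcal{T}_f)=0$.

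I do not expect a genuine obstacle: the corollary is a qualitative coarsening of Theorem~\ref{thm:main}, replacing the sharp additive margins $\omega(\sqrt[k]{\log h})$ and $\Omega(h^c)$ by a single multiplicative statement. The only mild subtlety is the asymmetry between the two margins of the theorem — a delicate $\omega(\sqrt[k]{\log h})$ window above criticality against a polynomial $\Omega(h^c)$ window below it — but the linear-in-$\sqrt[k]{h/(ek)}$ gap produced by any constant $c\neq 1$ comfortably absorbs both, so the reduction loses no sharpness at the multiplicative scale.
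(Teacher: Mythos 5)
Your proposal is correct and matches the paper's intent exactly: the paper states the corollary as an immediate consequence of Theorem~\ref{thm:main}, and your rewriting of the multiplicative factor $c$ as an additive margin of order $h^{1/k}$, which lies in $\omega\bigl(\sqrt[k]{\log h}\bigr)$ above criticality and in $\Omega(h^{c'})$ with $c'>0$ below it, is precisely the intended (and only needed) verification. One cosmetic remark: since the paper's Step~2 argument internally assumes an exponent strictly less than $1$, it is safest to weaken your deficit $\Omega(h^{1/k})$ to $\Omega(h^{c'})$ for some $0<c'<\min\{1/k,1\}$, which is immediate since $h^{c'}\leq h^{1/k}$ for $h\geq 1$.
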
 

% Instead of analyzing the first and second moment of the number of accessible paths,  in the proof of Theorem \ref{thm:main},  we use combinatoric techniques and Theorem \ref{thm:NK1perc}. There are two main reasons for this. In the first case, is easer to use combinatoric techniques. In the second case, we obtain a better bound, see Remark \ref{remark}.   
% 
% \begin{obs}\label{remark}
% Consider $P$ as a path from the root to a leaf in $T_h(f)$. 
% %Let $A_P$ be the event $\{ P  \text{ is $k$-accessible} \}$. 
% Using that $\mathbf{P}(\ P  \text{ is $k$-accessible}) \leq k^{h/k}/ \lfloor (h/k)\rfloor!$, we obtain  \[ \theta_k(T_h(f)) \leq f(h)^h \frac{k^{h/k}}{\lfloor h/k \rfloor!} \sim \left( f(h)\sqrt[k]{\frac{k^2 e}{n}}  \right) ^n \sqrt{2\pi n/k}.  \]
% Therefore $\theta_k(\mathcal{T}_f)=0$ whether $f(h)\leq  \left\lfloor\sqrt[k]{h/(ek^2)}c\right\rfloor$. This bound does not attain the value determined by Theorem \ref{thm:main}.
% \end{obs}

\section{Preliminaries}

Before proceeding with the proof of Theorem~\ref{thm:main}, we introduce the concept of $k$-transitive closure any we state an equivalent version of Theorem~\ref{thm:NK1perc}. 

We define the $k$-transitive closure of $G$, $G^k$, as the graph  obtained from it, by adding new edges from each vertex $u$ to each vertex $v$, with the property that $G$ does not already contain the directed edge from $u$ to $v$ but does contain a directed
path from $u$ to $v$  with length at most $k$. 

Although the concept of $k$-accessibility percolation was only introduced for trees, it can be easily extended  to directed graphs where: there is a single vertex distinguished as the source, some vertices distinguished as sinks, and all of the maximal paths are directed paths from the source to some sink; we name the graphs that satisfy the previous statements as monotone graphs. For the case of monotone graphs, we say that a path in it is $1$-accessible if: it starts in the source, ends at some sink and its vertices have increasing labels. Similar to the case of trees, we denote by $\theta_1(G)$ the probability of having a $1$-accessible path in $G$.

In Figure~\ref{fig:1AccessiblilityInT3} we illustrate the $2$-transitive closure of the graph depicted in Figure~\ref{fig:kAccessiblilityInT} and its $1$-accessible paths. Note that the $1$-accessible paths in the graph depicted in Figure~\ref{fig:1AccessiblilityInT3} correspond to the $2$-accessible paths in the graph depicted in Figure~\ref{fig:kAccessiblilityInT}. 

\begin{obs}\label{def1}
	 Let $T$ be a rooted tree with height $h$ and $T^k$ be its $k$-transitive closure. Then $\theta_1(T^k)=\theta_k(T)$.
\end{obs}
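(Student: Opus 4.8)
The plan is to show that the two events---``$T$ contains a $k$-accessible path'' and ``$T^k$ contains a $1$-accessible path''---coincide as subsets of the underlying probability space, which immediately forces equality of their probabilities. Since both graphs carry the \emph{same} labels $w(\cdot)$ on their common vertex set, this reduces to a purely combinatorial correspondence between the two families of paths, with no probabilistic input beyond the shared randomness.

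First I would verify that $T^k$ is a monotone graph, so that $\theta_1(T^k)$ is even well defined. By construction every edge of $T^k$ joins a vertex $u$ to a descendant $v$ of $u$ in $T$ at tree-distance between $1$ and $k$; in particular all edges are directed ``downward''. Hence the root is the unique vertex with no incoming edge (every other vertex keeps its parent edge from $T$), the leaves are exactly the vertices with no outgoing edge, and every maximal directed path of $T^k$ runs from the root (the source) to some leaf (a sink).

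Next I would set up the correspondence. Take a directed root-to-leaf path $u_0 \to u_1 \to \cdots \to u_s$ in $T^k$. Because each $u_{i+1}$ is a descendant of $u_i$ in $T$, the vertices $u_0,\dots,u_s$ form an ancestor--descendant chain and therefore all lie on a single root-to-leaf path $P = v_0 \to \cdots \to v_h$ of $T$; write $u_i = v_{r(i)}$ with $0 = r(0) < r(1) < \cdots < r(s) = h$. The defining property of the edges of $T^k$ is exactly that $r(i+1) - r(i) \le k$ for every $i$. The crux is then an elementary \emph{gap lemma}: for a subsequence of $P$ containing $v_0$ and $v_h$, the condition $r(i+1) - r(i) \le k$ for all $i$ holds if and only if every window of $k$ consecutive vertices of $P$ meets the subsequence. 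One direction is clear, since a gap of size $\ge k+1$ leaves a window $v_{r(i)+1}, \dots, v_{r(i)+k}$ disjoint from the subsequence; conversely, a window missing the subsequence is bracketed by two consecutive selected vertices $v_{r(i)}, v_{r(i+1)}$ with $r(i) \le j-1$ and $r(i+1) \ge j+k$, forcing a gap $\ge k+1$ (the endpoints $v_0, v_h$ guarantee such brackets exist for the boundary windows).

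Putting these together, a $1$-accessible path of $T^k$ is precisely a root-to-leaf path $P$ of $T$ together with a subsequence $S$ that has increasing labels, contains $v_0$ and $v_h$, and meets every window of $k$ consecutive vertices---that is, exactly a $k$-accessible path of $T$. This correspondence is label-preserving, so the two events are literally the same set and $\theta_1(T^k) = \theta_k(T)$. I expect the only points needing genuine care to be the gap lemma and the observation that a directed path in $T^k$ cannot ``branch'' and so stays on one root-to-leaf path of $T$; the rest is unwinding the definitions.
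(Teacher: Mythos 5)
Your proposal is correct and follows essentially the same route as the paper: both establish a label-preserving correspondence identifying the $1$-accessible paths of $T^k$ with the subsequences witnessing $k$-accessibility in $T$, so the two events coincide and the probabilities are equal. Your write-up merely makes explicit two details the paper leaves implicit (that $T^k$ is a monotone graph whose directed paths stay on a single root-to-leaf path of $T$, and the gap lemma equating ``all gaps $\le k$'' with ``every window of $k$ consecutive vertices meets $S$''), which is a sound elaboration rather than a different argument.
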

\begin{proof}
 Let $T$ be a rooted tree with height $h$ and $T^k$ be its $k$-transitive closure. 
 If $P$ is a $k$-accessible path in $T$, there is a subsequence $S$ of the vertices in $P$ with increasing labels, that contains at least one vertex in each $k$ consecutive vertices in $P$ and that contains the root and a leaf of $T$; therefore $S$ is a $1$-accessible path in $T^k$.
 On the other direction, 
 if there is a $1$-accessible path in $T^k$, the vertices in such path define a subsequence $S$ of the vertices in some path $P$ in $T$ that make $P$ a $k$-accessible path.
\end{proof}

\begin{figure}[htb]
	\centering
	\includegraphics[width=.6\linewidth]{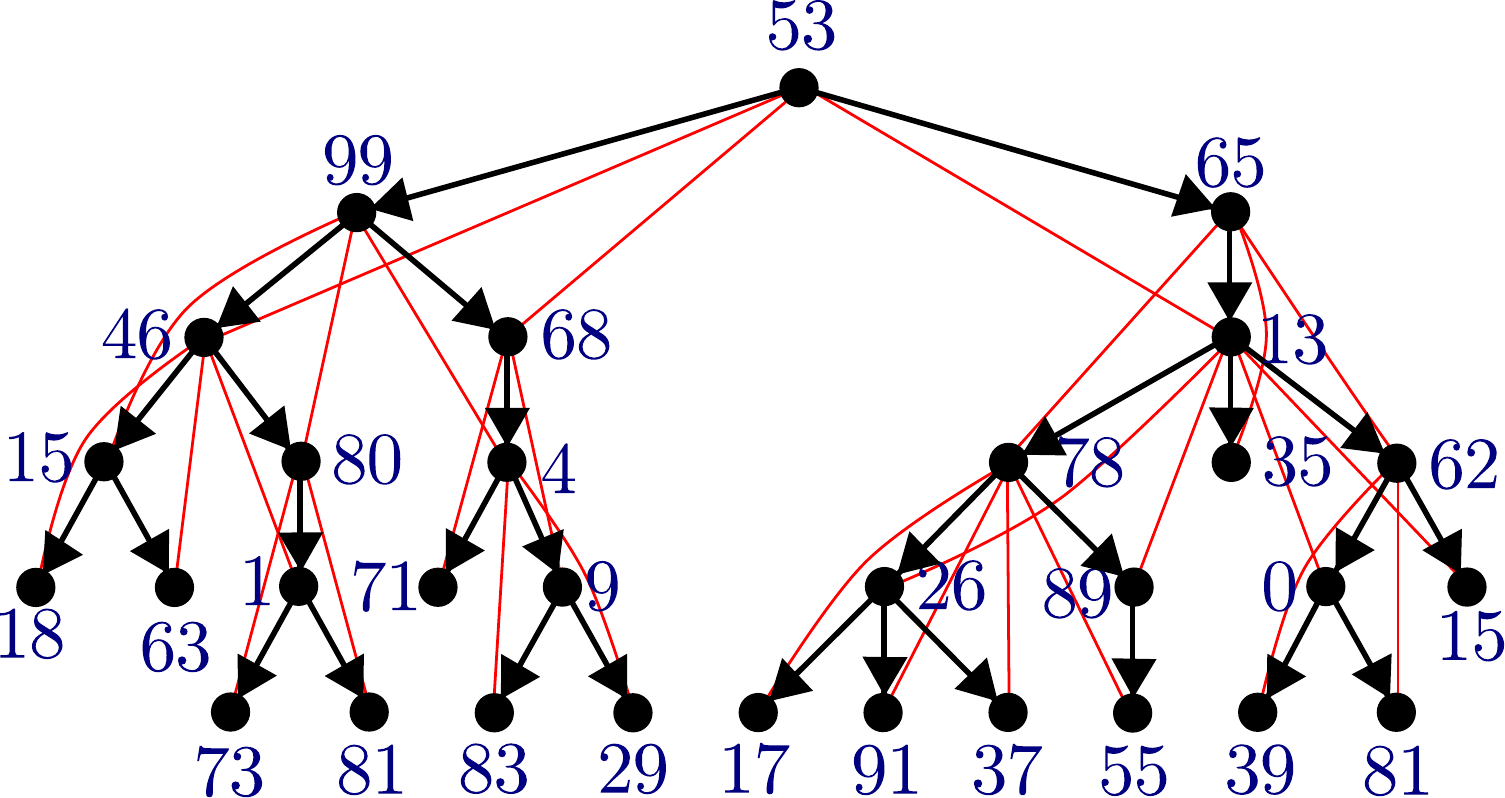}
	\caption{An illustration of the $2$-transitive closure of the graph depicted in Figure~\ref{fig:kAccessiblilityInT}, the $1$-accessible paths in such graph are labeled as: $53$,$68$,$71$; and $53$,$65$,$78$,$91$.} 
	\label{fig:1AccessiblilityInT3}
\end{figure}%

For proving Theorem~\ref{thm:main} we use the following proposition, that it is an equivalent version of Theorem~\ref{thm:NK1perc}.

\begin{prop}\label{prop:nk}
\begin{equation*}
 \theta_1 (\mathcal{T}_f) = 
	\begin{cases}
	1,      & \quad \text{if } f(h)= \frac{h}{e}+ \omega\left(\log(h) \right),  \\
	0,      & \quad \text{if } f(h)= \frac{h}{e}+ \frac{\log(h)}{2e} - \omega(1).\\
	\end{cases}  
\end{equation*} 
\end{prop}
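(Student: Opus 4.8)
The plan is to show that Proposition~\ref{prop:nk} is just a reparametrization of Theorem~\ref{thm:NK1perc}, so the task reduces to checking that the two descriptions of the threshold window around $h/e$ agree. In Theorem~\ref{thm:NK1perc} the arity is written as $f(h)=\left(\frac{1+\beta_h}{e}\right)h = \frac{h}{e}+\frac{\beta_h h}{e}$, with $\beta_h\to 0$. So if I set the deviation $\delta_h := f(h)-\frac{h}{e}$, then $\delta_h = \frac{\beta_h h}{e}$, equivalently $\beta_h = \frac{e\,\delta_h}{h}$. The whole proof is then a dictionary translation: rewrite the two hypotheses of Theorem~\ref{thm:NK1perc} (the $\theta_1=1$ condition $h\beta_h/\log h\to\infty$ and the $\theta_1=0$ condition $\log h - 2h\beta_h\to\infty$) in terms of $\delta_h$, and verify they coincide with the conditions $\delta_h=\omega(\log h)$ and $\delta_h = \frac{\log h}{2e}-\omega(1)$ stated in Proposition~\ref{prop:nk}.

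Carrying this out, for the percolating case substitute $\beta_h = e\delta_h/h$ into $h\beta_h/\log h$ to get $e\delta_h/\log h$, so $h\beta_h/\log h\to\infty$ is equivalent to $\delta_h/\log h\to\infty$, i.e.\ $\delta_h=\omega(\log h)$; this is exactly the first line of Proposition~\ref{prop:nk}, giving $\theta_1(\mathcal{T}_f)=1$. For the non-percolating case substitute the same into $\log h - 2h\beta_h$ to get $\log h - 2e\delta_h$, and the condition $\log h - 2e\delta_h\to\infty$ rearranges to $\delta_h < \frac{\log h}{2e} - \frac{1}{2e}(\text{something}\to\infty)$, i.e.\ $\delta_h = \frac{\log h}{2e} - \omega(1)$, which is the second line of Proposition~\ref{prop:nk}, giving $\theta_1(\mathcal{T}_f)=0$. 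Thus both cases follow immediately from Theorem~\ref{thm:NK1perc} once the substitution is made.

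One subtlety I would flag is the requirement $\beta_h\to 0$ built into the hypothesis of Theorem~\ref{thm:NK1perc}: I should check that the $\delta_h$ ranges considered in Proposition~\ref{prop:nk} are all $o(h)$, so that $\beta_h = e\delta_h/h\to 0$ indeed holds and Theorem~\ref{thm:NK1perc} applies. In the percolating regime $\delta_h=\omega(\log h)$ need not a priori be $o(h)$, but for the purpose of establishing $\theta_1=1$ it suffices to treat the boundary regime where $\delta_h$ is comparably small (larger deviations only make percolation easier, and can be handled by monotonicity in $f$ together with Theorem~\ref{thm:NK1p}); in the non-percolating regime $\delta_h\le \frac{\log h}{2e}=o(h)$ automatically. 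I expect the only real care needed is this bookkeeping at the boundary of applicability of Theorem~\ref{thm:NK1perc}, invoking monotonicity of $\theta_1$ in the arity where the raw substitution does not directly apply; the algebraic core of the equivalence is otherwise routine.
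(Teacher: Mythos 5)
Your proposal is correct and is essentially the paper's own argument: the authors likewise prove the proposition by reparametrizing the deviation from $h/e$ (writing $f(h)=\frac{h}{e}+g(h)\log(h)$, resp.\ $f(h)=\frac{h}{e}+\frac{\log(h)}{2e}-g(h)$, and solving for $\beta_h$) and checking that the conditions $h\beta_h/\log h\to\infty$ and $\log h-2h\beta_h\to\infty$ of Theorem~\ref{thm:NK1perc} translate into $g(h)=\omega(1)$. Your extra care about the hypothesis $\beta_h\to 0$ in the percolating case (handled by truncating $\delta_h$ and using monotonicity of $\theta_1$ in the arity, or Theorem~\ref{thm:NK1p}) is a legitimate point that the paper's proof passes over silently, so your version is if anything slightly more complete.
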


\begin{proof}
 Let $f(h)$, $g(h)$ and $\beta_h$ be such that
 \[f(h)= \frac{h}{e}+ g(h)\log(h)= \frac{1+\beta_h}{e}h.\] 
 As $\beta_h=g(h)\frac{h}{\log(h)}$, it follows that, $h\beta_h/\log h$ goes to infinity when  $h$ goes to infinity, if and only if $g(h)=\omega(1)$. Therefore, by Theorem~\ref{thm:NK1perc}, if $f(h)= \frac{h}{e}+\omega(\log(h))$ then  $f(h)= \frac{h}{e}+\omega(1)\log(h)$ and $\theta_1 (\mathcal{T}_f)=1$.

 Let $f(h)$, $g(h)$ and $\beta_h$ be such that
 \[f(h)= \frac{h}{e}+\frac{\log(h)}{2e}- g(h)= \frac{1+\beta_h}{e}h.\] 
 As $\beta_h=\frac{\log(h)- 2e\cdot g(h)}{2h}$, it follows that, $\log h -2h\beta_h$ goes to infinity when  $h$ goes to infinity, if and only if $g(h)=\omega(1)$. Therefore, by Theorem~\ref{thm:NK1perc}, if $f(h)= \frac{h}{e}+\frac{\log(h)}{2e}-\omega(1)$ then $\theta_1 (\mathcal{T}_f)=0$.
  
\end{proof}

\section{Proof of Theorem~\ref{thm:main} }

Now we proceed to prove Theorem~\ref{thm:main}. The proof is divided two steps according with the two cases in Equation~\ref{eq:mainthm}.

\subsection*{Step 1}
Let $\mathcal{T}_f$ be the sequence of $n$-ary trees $\lbrace T_h(f)\rbrace_{h\in \mathbb{Z}_+}$. In this section we assume $$f(h)\geq  \sqrt[k]{h/(ek)}+\omega \left( \sqrt[k]{\log(h)} \right)$$ and we prove that $\theta_k(\mathcal{T}_f)=1$.

Let $\mathcal{T}^k_f$ be the sequence of graphs $\lbrace T_h^k(f)\rbrace_{h\in \mathbb{Z}_+}$ where $T_h^k(f)$ is the $k$-transitive closure of $T_h(f)$. 
Let $g(h)= h/e + \omega \left(\log(h) \right)$, $T^k=T_h^k(f)$ and $T'=T_{\left\lfloor h/k\right\rfloor }(g)$. By Remark~\ref{def1}, provided the limit exists,
\[ \theta_k(\mathcal{T}_f)=\lim_{h\rightarrow \infty}\theta_k(T_h(f))=
\lim_{h\rightarrow\infty}\theta_1(T_h^k(f)).\] 
By Proposition~\ref{prop:nk} 
\[\lim_{h\rightarrow\infty}\theta_1(T_h(g))=\theta_1( \mathcal{T}_g)=1.\]
To prove that $\theta_k(\mathcal{T}_f)=1$, it is enough to show that 
$\theta_1(T^k)\geq \theta_1(T')$.

Let $G$ be the subgraph of $T^k$, obtained from removing the vertices in $T$, whose distance from the root is not a multiple of $k$. Note that $G$ is a tree contained in $T^k$ with height $\left\lfloor h/k\right\rfloor$. 
Also note that, for $h$ large enough, 
\[ \text{deg}_G(v)\geq \left( \sqrt[k]{h/(ek)}+\omega \left(\sqrt[k]{\log(h)} \right) \right) ^k \geq {h/(ek)}+\omega \left({\log(h)} \right) \geq  g(\left\lfloor h/k\right\rfloor);\]
thus the non leaves vertices of $G$ have degree at least $g(\left\lfloor h/k\right\rfloor)$. Therefore, for $h$ large enough $T'\subset G \subset T^k.$  Therefore, the probability of having at least one  $1$-accessible path in $T'$, is a lower bound of the probability of having at least an $1$-accessible path in $T^k$. 

\subsection*{Step 2}
Let $\mathcal{T}_f$ be the sequence of $n$-ary trees $\lbrace T_h(f)\rbrace_{h\in \mathbb{Z}_+}$. In this section we assume $f(h)\leq  \sqrt[k]{h/(ek)}- \Omega(h^c)$ for some $0<c<1$, and we prove that $\theta_k(\mathcal{T}_f)=0$.

Let $\mathcal{T}^k_f$ be the sequence of graphs $\lbrace T_h^k(f)\rbrace_{h\in \mathbb{Z}_+}$ where $T_h^k(f)$ is the $k$-transitive closure of $T_h(f)$. Let $g(h)=\frac{h}{e}$, $T=T_h(f)$, $T^k=T_h^k(f)$ and $T'=T_{ h/k}(g)$. By Remark~\ref{def1}, provided the limit exists,
\[ \theta_k(\mathcal{T}_f)=\lim_{h\rightarrow \infty}\theta_k(T_h(f))=
\lim_{h\rightarrow\infty}\theta_1(T_h^k(f)).\] 
By Proposition~\ref{prop:nk}
\[\lim_{h\rightarrow\infty}\theta_1(T_h(g))=\theta_1( \mathcal{T}_g)=0.\]
To prove that $\theta_k(\mathcal{T}_f)=0$, it is enough to show that 
$\theta_1(T^k)\leq \theta_1(T')$. For this we define a tree $H^k$ such that
\[ \theta_1(T^k)\leq \theta_1(H^k) \leq  \theta_1(T').\]

$H^k$ will denote a tree, whose vertices are in correspondence with the paths in $T^k$ that start in the root.  As an outline of the construction of $H^k$ note that, any  path in $T^k$ is characterized by the following two issues. The first issue is the vertex $v$ in which the path ends. Suppose that $v$ is at distance $l$ to the root. The second issue is the information about the levels in which it has no vertices; it can be represented by a subset $s$ of $\lbrace 1,2,\ldots,l-1 \rbrace$ that does not contain $k$ consecutive numbers. Therefore the vertices in $H^k$ will be determined by a vertex $v$ in $T^k$ and a $s$ subset of $\lbrace 1,2,\ldots,l-1 \rbrace$ that does not contain $k$ consecutive numbers. About the edges in $H^k$, two vertices will be adjacent in $H^k$ if: one of the corresponding paths in $T^k$ contains the other corresponding path, and those corresponding only differ in one edge. 
As an example of this outline, consider the paths $v_0,v_1,v_3$ and $v_0,v_1,v_3,v_5$ in the graph $T^2$ illustrated in Figure~\ref{fig:ex-H:Tg2}; in $H^2$, illustrated in Figure~\ref{fig:ex-H:H}, those paths are represented by $v_3^{ \{2\}}$ and $v_5^{ \{2,4\}}$, respectively, and they are adjacent. 

We define $H^k$ formally as follows.

Let $\mathcal{A}_l$ be the set whose elements are the subsets $s$ of $ \lbrace 1,2,\ldots ,l-1 \rbrace$ that do not contain $k$ consecutive numbers. Let $H^k=(V,E)$ be the graph whose vertices and edges are defined as follows. See Figure~\ref{fig:ex-H}.
\begin{figure} [htb]
	\begin{subfigure}{.2\textwidth}
		\centering
		\includegraphics[width=.26\linewidth]{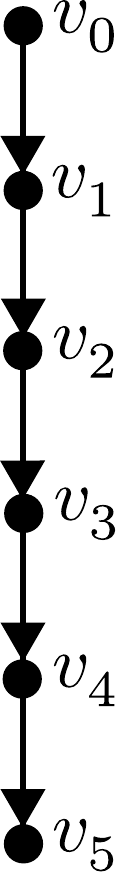}
		\caption{}
		\label{fig:ex-H:Tg}
	\end{subfigure}%
	\begin{subfigure}{.2\textwidth}
		\centering
		\includegraphics[width=.5\linewidth]{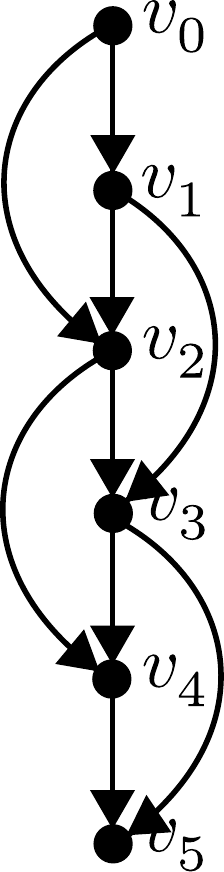}
		\caption{}
		\label{fig:ex-H:Tg2}
	\end{subfigure}%
	\begin{subfigure}{.6\textwidth}
		\centering
		\includegraphics[width=0.9\linewidth]{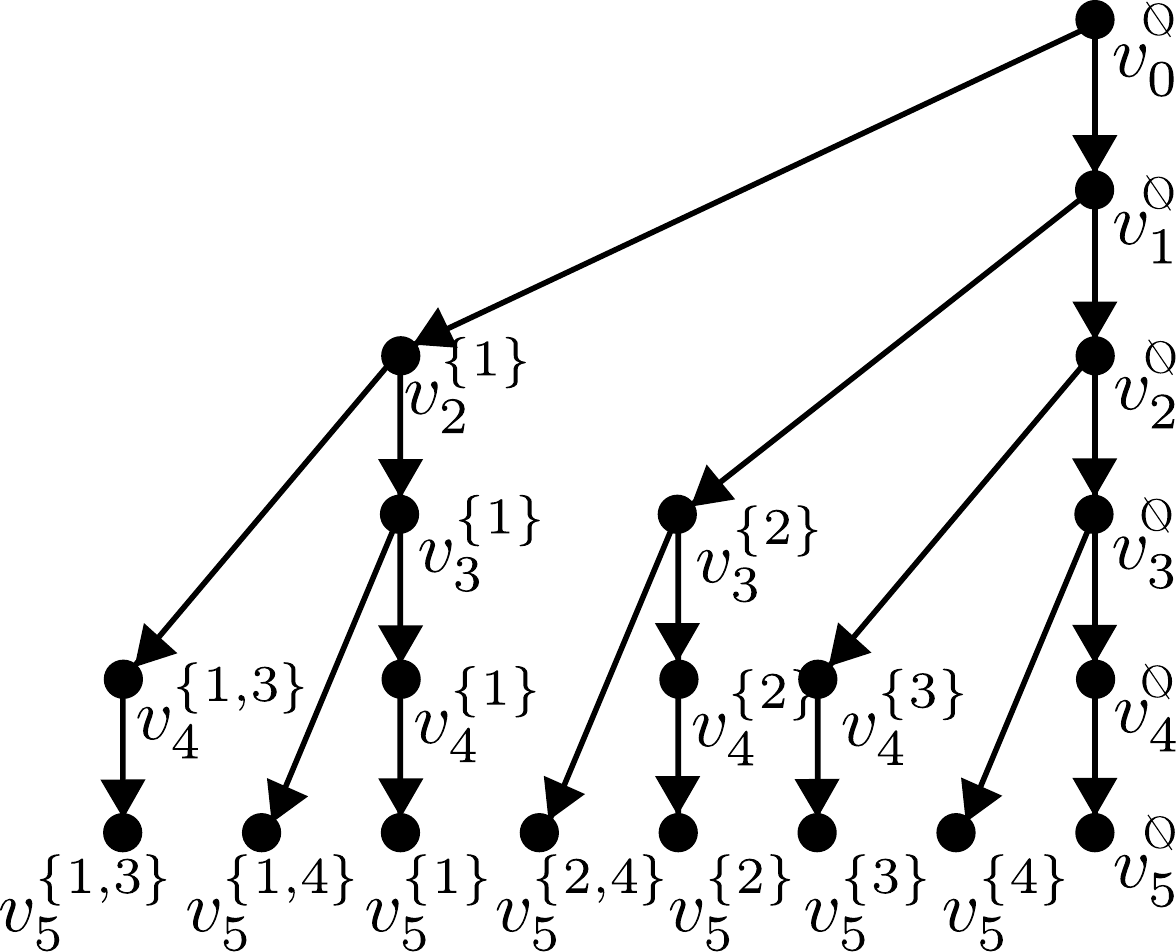}
		\caption{}
		\label{fig:ex-H:H}
	\end{subfigure}%
	\caption{An example of the subgraphs of $T$ (\ref{fig:ex-H:Tg}), $T^2$ (\ref{fig:ex-H:Tg2}) and $H^2$ (\ref{fig:ex-H:H}), respectively, corresponding to six consecutive vertices in the path $v_0, v_1, v_2, v_3, v_4, v_5, v_5, v_5$ depicted Figure~\ref{fig:kAccessiblilityInT}. }
	\label{fig:ex-H}
\end{figure}
The set of vertices in $H^k$ is defined as 
\[V:=\lbrace v^s:v\in T \textnormal{ and } s\in \mathcal{A}_l \textnormal{, where $l$ is the distance of $v$ to the root} \rbrace. \]
The edges in $H^k$ are defined recursively as follows. 
Let $v$ be a vertex of $T$ at distance $l$ to the root and let $s\in \mathcal{A}_l$. 
We say that $w^{s'}$ is a son of $v^s$ if:
\begin{itemize}
	\item $w$ is a son of $v$ in $T$ and $s=s'$
	\item $w$ is a descendant of $v$ at distance $1<j\leq k$ in $T$ and $s'=s\cup\lbrace l+1,l+2,\ldots l+j-1 \rbrace$
\end{itemize}

Given a vertex $v$ in $T^k$, we claim that: the paths in $T^k$ from the root to $v$, the elements in $\mathcal{A}_l$, and the paths in $H^k$ from the root to $v^s$ (for some $s\in\mathcal{A}_l$),  are in one-to-one correspondence. For each path $P$ in $T^k$ from the root to $v$, there exist one and only one $s$ in $\mathcal{A}_l$ whose elements are the indices $i$ such that $v_i$ is not in $P$. For each $s$ in $\mathcal{A}_l$, there is one and only one path in $P$ that starts in the root and ends in $v$; such path contains the vertices $v_i$ such that $i$ is not in $s$. For each path in $H^k$ from the root to $v^s$, obviously there is one and only one $s\in \mathcal{A}_l$. Now suppose that $s$ is in $\mathcal{A}_l$, there is one and only one path $P$ in $H^k$ from the root to $v^s$; such path contains the vertices $v_i^{s(i)}$ where $i$ is not in $s$ and $s(i)=s\cap \{1,2,\dots, i-1\}$. As an example consider: In the graph $G^2$ for the graph $G$ in Figure~\ref{fig:kAccessiblilityInT}, the path $v_0,v_1,v_3,v_5$; in $\mathcal{A}_5$, the set $\{2,4\}$; and in $H^2$, the path $v_0^{\phi},v_1^{\phi},v_3^{\{2\}},v_5^{\{2,4\}}$ (See Figure~\ref{fig:ex-H}). 

Assume that the vertices in $H^2$ are labelled with independent and identically distributed random variables with the same distribution as the labels in $T$.

 \begin{figure} [htb]
  \begin{subfigure}{.3\textwidth}
  \centering
  \includegraphics[width=.34\linewidth]{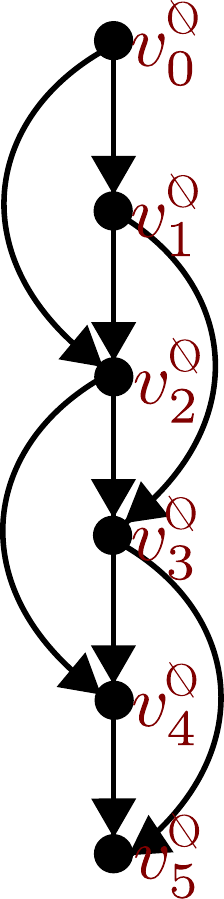}
  \caption{}
  \label{fig:ex-H:H0}
 \end{subfigure}%
 \begin{subfigure}{.33\textwidth}
  \centering
  \includegraphics[width=.36\linewidth]{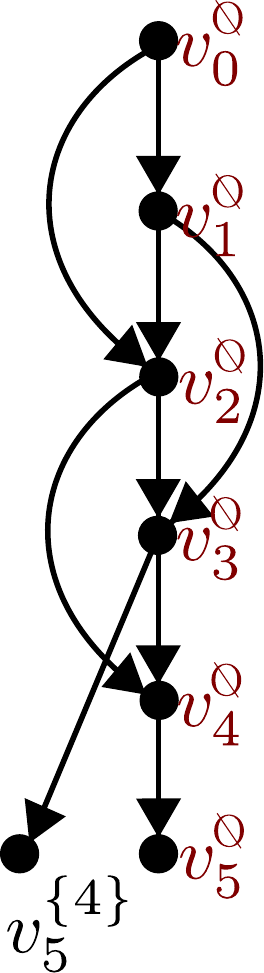}
  \caption{}
  \label{fig:ex-H:H1}
  \end{subfigure}%
  \begin{subfigure}{.36\textwidth}
  \centering
  \includegraphics[width=0.50\linewidth]{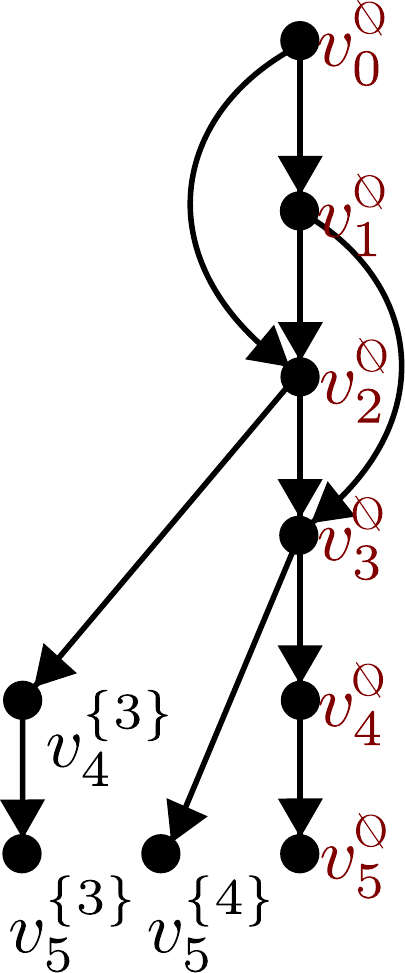}
  \caption{}
  \label{fig:ex-H:H2}
 \end{subfigure}%

 \begin{subfigure}{.4\textwidth}
  \centering
  \includegraphics[width=.73\linewidth]{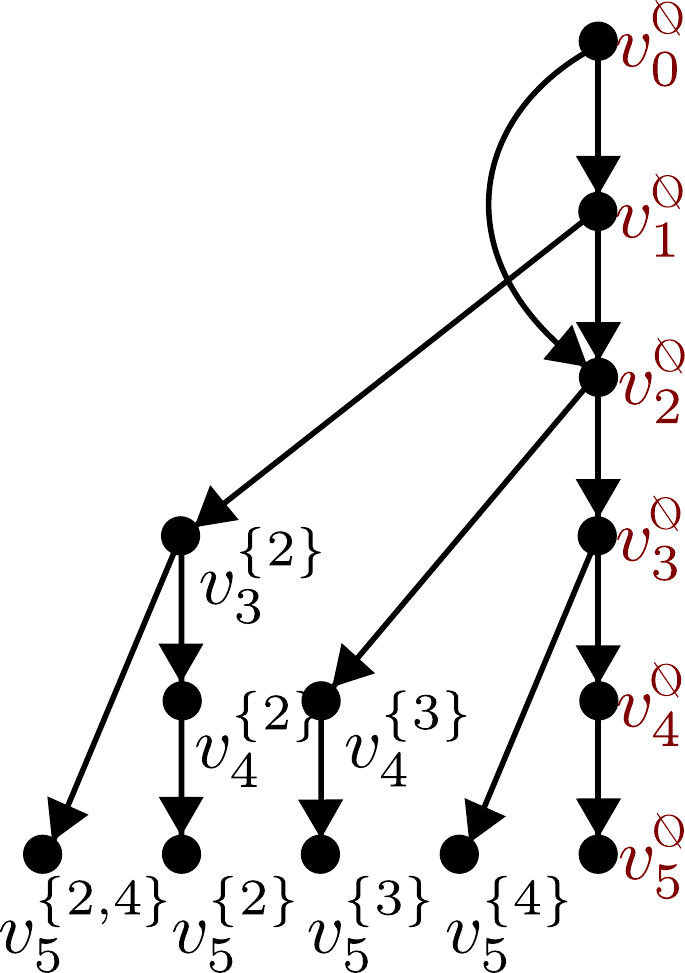}
  \caption{}
  \label{fig:ex-H:H3}
 \end{subfigure}%
 \begin{subfigure}{.6\textwidth}
  \centering
  \includegraphics[width=.85\linewidth]{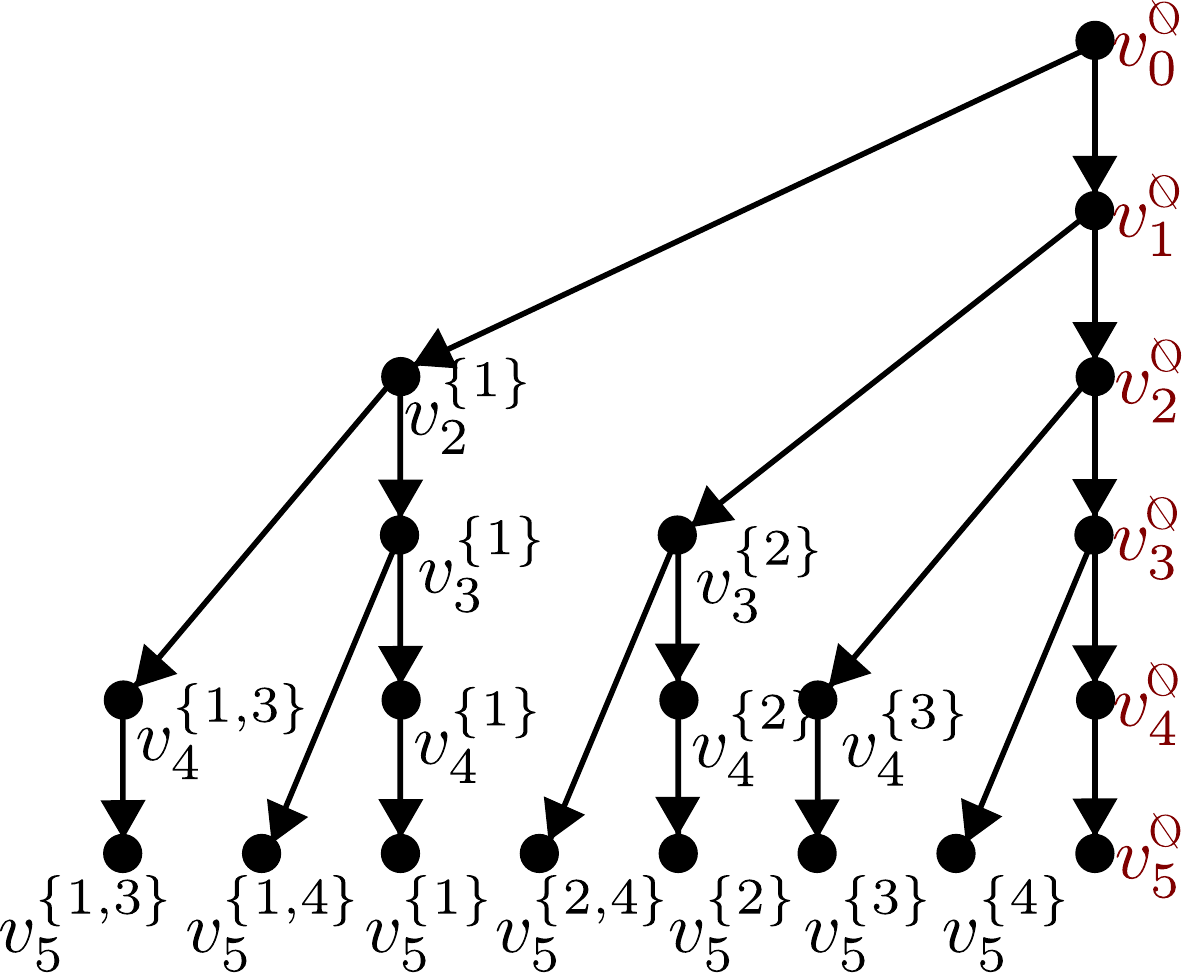}
  \caption{}
  \label{fig:ex-H:H4}
 \end{subfigure}%
 \caption{An example of a sequence of graphs in which the graph depicted in Figure~\ref{fig:ex-H:H} is obtained from the graph in Figure~\ref{fig:ex-H:Tg2}. It illustrates how to obtain the sequence of graphs used in Lemma~\ref{lem:Tg2-H-2}.}
 \label{fig:ex-H-etapas}
 \end{figure}

\begin{lem}\label{lem:Tg2-H-2}
	$\theta_1(H^k)\geq \theta_1(T^k).$
\end{lem}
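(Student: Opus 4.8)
The plan is to interpolate between $T^k$ and $H^k$ through a finite sequence of graphs $T^k = G_0, G_1, \ldots, G_M = H^k$, where each $G_{i+1}$ is obtained from $G_i$ by a single \emph{vertex split}, and to show that no split decreases the probability of containing a $1$-accessible path. This is exactly the sequence of operations illustrated in Figure~\ref{fig:ex-H-etapas}: starting from $T^k$ we repeatedly choose a vertex $x$ whose strict ancestors already form a tree but which still has two or more incoming edges, we separate its incoming edges into two nonempty groups, we replace $x$ by two vertices $x^{(1)}, x^{(2)}$ receiving these groups, and we attach to each of them a private copy of the whole sub-DAG hanging from $x$; the copy attached to $x^{(2)}$ keeps the old labels while the copy attached to $x^{(1)}$ receives fresh i.i.d.\ labels with the common distribution. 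Each such split enlarges the already-unfolded tree part, and iterating until every root-path of $T^k$ has become its own branch terminates precisely at the path-unfolding $H^k$ described before the lemma. Thus it suffices to prove the atomic inequality $\theta_1(G_i) \le \theta_1(G_{i+1})$.

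For the atomic step write $G = G_i$, $G' = G_{i+1}$, let $x$ be the split vertex and let $D$ be the sub-DAG hanging from $x$ (including $x$), which by the ordering of the splits is entered only through $x$. I would condition on the labels of all vertices outside $D$. Given this information the event ``$G$ has a $1$-accessible path avoiding $x$'' is already determined; if it holds both $G$ and $G'$ percolate, so assume it fails. For each group $j \in \{1,2\}$ the best increasing prefix reaching $x$ through that group imposes a \emph{threshold} $\tau_j \in \mathbb{R} \cup \{+\infty\}$: a path entering $x$ through group $j$ prolongs to a $1$-accessible path iff the label at $x$ exceeds $\tau_j$ and the continuation inside a copy of $D$ is increasing. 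Writing $(x\text{-label}, D\text{-labels}) = (\xi, \ell)$ and letting $\sigma_\ell(\cdot)$ be the indicator that a downstream copy with labels $\ell$ admits an increasing path out of $x$, the two ``success through group $j$'' events are
\[
E_j = \{\xi > \tau_j\} \cap \{\sigma_\ell(\xi) = 1\}, \qquad j = 1,2.
\]
In $G$ the single shared copy forces both groups to use the \emph{same} pair $(\xi,\ell)$, so the conditional percolation probability is $\Pr(E_1 \cup E_2)$; in $G'$ the two copies carry independent pairs, so it is $1-(1-\Pr E_1)(1-\Pr E_2)$. Their difference equals $\Pr(E_1 \cap E_2) - \Pr(E_1)\Pr(E_2)$, the covariance of $E_1$ and $E_2$.

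The crux is to show this covariance is nonnegative, and here the \emph{threshold} shape of the events is decisive: since $E_1$ and $E_2$ differ only through the constant $\tau_j$ cutting the \emph{same} variable $\xi$, while sharing the identical downstream condition $\{\sigma_\ell(\xi)=1\}$, they are nested. Indeed, assuming $\tau_1 \le \tau_2$ we get $E_2 \subseteq E_1$, whence $\Pr(E_1 \cap E_2) = \Pr(E_2) \ge \Pr(E_1)\Pr(E_2)$ and the covariance equals $\Pr(E_2)\bigl(1-\Pr(E_1)\bigr) \ge 0$. Averaging over the conditioning gives $\theta_1(G_{i+1}) \ge \theta_1(G_i)$, and the telescoping chain $\theta_1(T^k) = \theta_1(G_0) \le \cdots \le \theta_1(G_M) = \theta_1(H^k)$ yields the lemma. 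I expect the atomic comparison to be the main obstacle: a naive deterministic coupling fails, because any $1$-accessible path entering $x$ through the group whose copy is re-randomized is destroyed by the fresh labels, so one cannot transfer paths one-by-one, and freeing shared labels one at a time need not help unless the relevant good-sets are nested. It is only after conditioning away the downstream randomness and recognizing the nested threshold structure that the required positive correlation becomes transparent. A secondary, more technical point is the bookkeeping in the construction of $G_0, \ldots, G_M$: the splits must be ordered so that the sub-DAG hanging from the current split vertex can be duplicated as a genuinely private copy, which is exactly what legitimizes the conditioning in the atomic step.
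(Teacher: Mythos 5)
Your proposal is correct, and at the structural level it coincides with the paper's proof: both interpolate from $T^k$ to $H^k$ through the same sequence of vertex-splitting operations (your requirement that the sub-DAG $D$ be entered only through $x$ is the operative content of the paper's notion of a \emph{divisible} vertex, your ordering caveat matches the bottom-up availability of such vertices, and the paper's step --- detaching the single edge $e(v)$ from the farthest ancestor and attaching a fresh copy $T^Q$ of $T(v)$ --- is the special case of your bipartition in which one group is a singleton). Where you genuinely diverge is in how the atomic inequality $\theta_1(H_i)\leq\theta_1(H_{i+1})$ is established. The paper decomposes the difference according to whether percolation uses the moved edge (events $\Gamma_1$, $\Gamma_2$), arriving at $\mathbf{P}(\Gamma_1)\bigl[\mathbf{P}([0\leadsto H_i-e(v)]\mid\Gamma_1)-\mathbf{P}([0\leadsto H_{i+1}-e'(v)]\mid\Gamma_2)\bigr]\geq 0$, and the crux --- that conditioning on $\Gamma_1$, in particular on $[v_j^s\leadsto T(v_j^s)]$ and on $w(x)<w(v_j^s)$, events involving labels of vertices that remain in the backup graph $H_i-e(v)$, can only increase the backup probability --- is asserted in essentially one line. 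You instead condition on all labels outside $D$, which freezes the thresholds $\tau_1,\tau_2$, write the one-step difference as the expected covariance $\mathbf{P}(E_1\cap E_2)-\mathbf{P}(E_1)\mathbf{P}(E_2)$ of the two entry events, and obtain nonnegativity from the nesting $E_2\subseteq E_1$ (when $\tau_1\leq\tau_2$), since both events cut the same variable $\xi$ and share the identical downstream condition $\{\sigma_\ell(\xi)=1\}$. This buys a fully elementary and self-contained justification of exactly the positive-correlation step that is the least-detailed point of the paper's argument (the paper's conditional manipulations comparing $w(v_j^s)$ with the independent fresh label $w(v_j^{s\cup Q})$ require care, precisely because $v_j^s$ still belongs to $H_i-e(v)$ while the root of $T^Q$ does not belong to $H_{i+1}-\lbrace e'(v),T^Q\rbrace$); the price, which you correctly flag, is the bookkeeping guaranteeing that $D$ is private at the moment of the split --- formally a bit stronger than the paper's induced-tree condition, but valid in the bottom-up order and needed in any case to legitimize the conditioning.
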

\begin{proof}
 In what follows we define a sequence of graphs $H_0, H_1, \ldots, H_t$, such that $H_0=T^k$, $H_t=H^k$ and $\theta_1(H_i)\leq \theta_1(H_{i+1})$, for $i=0,1,\ldots, t-1$.
 The graph $H_0$ is obtained from $T^k$ changing each vertex $v$ for a vertex $v^\phi$, see Figure~\ref{fig:ex-H:H0}. Given some $H_i$, we say that a vertex $v\in H_i$ is divisible if: (see vertex $v_j^s$ in Figure~\ref{fig:divisiones1})
 \begin{itemize}
  \item The subgraph of $H_i$, induced by $v$ and its descendants vertices, is a directed tree. We denote such tree as $T(v)$.
  \item There are at least two edges that starts in an ancestor of $v$ and ends at $v$. From such edges we denote by $e(v)$ the edge that starts in the ancestor $x=u^s$ of $v$ such that $u$ is the nearest to the root in $T$.
 \end{itemize}
	\begin{figure} [htb]
		\begin{subfigure}{.5\textwidth}
			\centering
			\includegraphics[width=.9\linewidth]{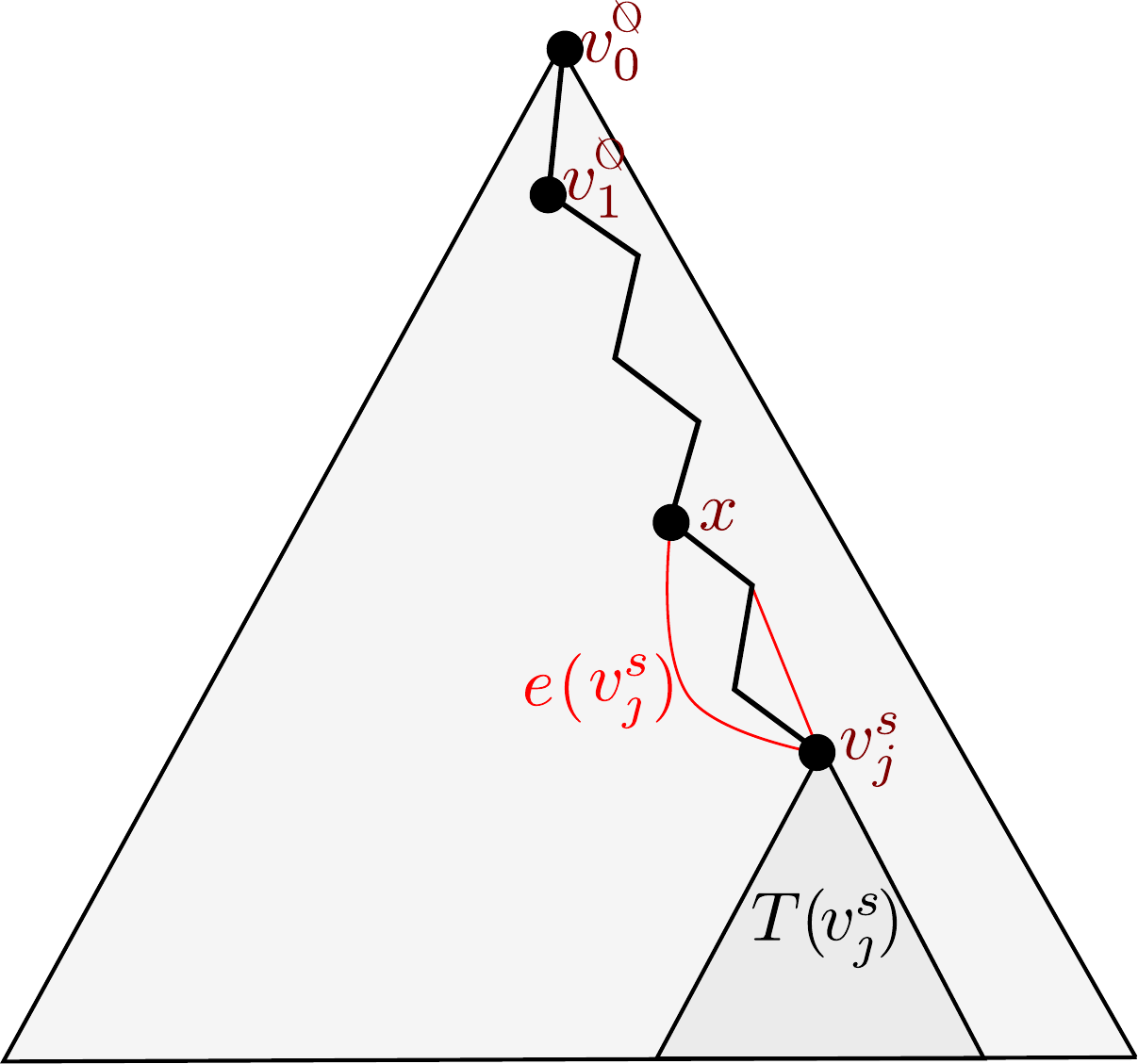}
			\caption{}
			\label{fig:divisiones1}
		\end{subfigure}%
		\begin{subfigure}{.5\textwidth}
			\centering
			\includegraphics[width=.9\linewidth]{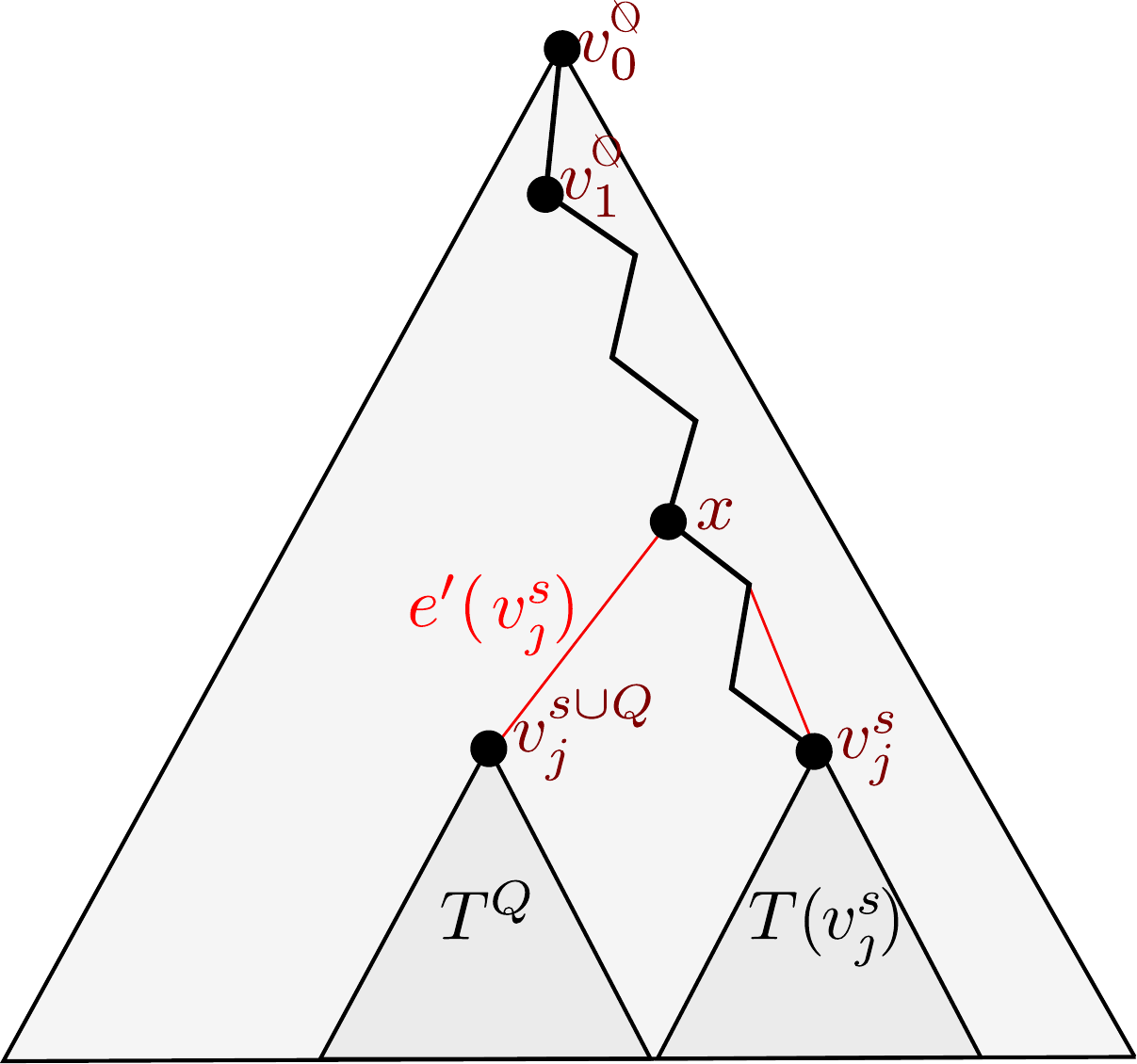}
			\caption{}
			\label{fig:divisiones2}
		\end{subfigure}%
		\caption{ Illustration of the proof of $\theta_1(H_i)\leq \theta_1(H_{i+1})$. }
		\label{fig:divisiones}
	\end{figure}
Whenever $H_i$ has at least one divisible vertex, we define $H_{i+1}$ as follows. Let $v^s$ be a divisible vertex in $H_i$. Let $v_0,v_1, \ldots, v_j$ be a path in $T$ from the root to $v$, then $v_0^{s(0)},v_1^{s(1)}, \ldots, v_j^{s(j)}$ is a path in $H_i$ from the root to $v^s$. Let $x=v_{j'}^{s(j')}$ be the vertex where $e(v)$ starts. Let $Q$ be the indices of the vertices that $e(v)$ jumps, $i.e.$ $Q:=\lbrace \{ j'+1, j'+2, \ldots j-1\} \rbrace$. Let $T^Q$ be a copy of $T(v)$ where each vertex $u^{s'}$ is changed by $u^{s'\cup Q }$. $H_{i+1}$ is defined as the graph obtained from $H_i$ by: removing $e(v)$, adding $T^Q$, and adding an edge $e'(v)$ that starts in $x$ and ends at the root of $T^Q$. See Figure~\ref{fig:ex-H-etapas}.

 The reader may notice that, if $H_i$ does not have divisible vertices then $H_i=H^k$. It remains to prove that $\theta_1(H_i)\leq \theta_1(H_{i+1})$.
 
 Given a vertex $y$, in a labeled directed graph $G$, we denote by $[y\leadsto G]$ the event of having a path in $G$ that starts in $y$,  ends at a sink of $G$ and has increasing labels; we denote by $[y\not\leadsto G]$ its complement. For the case when $y$ is the root it is denoted by $0$. 
 
 Consider in the graph in Figure~\ref{fig:divisiones1}, the event of having an $1$-accessible path  that contains the edge $e(v_j^s)$.  Note that such event occurs, if and only if, there is an increasing path from $0$ to $x$, $w(x)<w(v^s_j)$ and $[v^s_j \leadsto T(v_j^s)]$. Similarly, consider the event in Figure~\ref{fig:divisiones2}, the event of having an $1$-accessible path  that contains the edge $e'(v_j^s)$.  Note that such event occurs, if and only if, there is an increasing path from $0$ to $x$, $w(x)<w(v_j^{s\cup Q})$ and $[v_j^{s\cup Q} \leadsto T^Q]$.
 With the notation introduced in the construction of $H_{i+1}$, define $\Gamma_1$ has the event of having an $1$-accessible path in $H_i$ that contains the edge $e(v_j^s)$, and define $\Gamma_2$ has the event of having an $1$-accessible path in $H_{i+1}$ that contains the edge $e'(v_j^s)$. 
 
 This lemma follows from the following facts:
 \begin{itemize}
  \item Note that $\Gamma_1$ is the event of having an increasing path from $0$ to $x$, $w(x)<w(v_j^s)$ and $[v_j^s \leadsto T(v_j^s)]$; and $\Gamma_2$ is the event of having an increasing path from $0$ to $x$, $w(x)<w(v^{s\cup Q}_j)$ and $[v^{s\cup Q}_j \leadsto T^Q]$. Therefore, as
 \begin{align*}
  \mathbf{P}\left( w(x)<w(v^{s}_j) \right)&=\mathbf{P}\left( w(x)<w(v^{s\cup Q}_j) \right) \text{ and }\\
  \mathbf{P}\left( [v_j^s \leadsto T(v_j^s)] \right)&=\mathbf{P}\left( [v^{s\cup Q}_j \leadsto T^Q] \right) \text{ then }\\
  \mathbf{P}\left( \Gamma_1 \right)&=\mathbf{P}\left( \Gamma_2 \right).
 \end{align*}
  \item   As the only edge that joins $T^Q$ with the vertices in $H_i$ is $e'(v_j^s)$, and $H_i$ and $H_{i+1}$ only differ in $e(v_j^s)$, $e'(v_j^s)$ and $T^Q$ then
  $H_{i} -e(v_j^s)=H_{i+1} -\lbrace e'(v_j^s),T^Q\rbrace$.
  Therefore, as there are no paths in $H_{i+1} -e'(v_j^s)$ from the root to a leaf in $T^Q$ then  
  \begin{align*}
   \mathbf{P} \left(  [0\leadsto H_{i+1} -e'(v_j^s)] \right) 
   &= \mathbf{P} \left(  \left[0\leadsto H_{i+1} -\lbrace e'(v_j^s),T^Q\rbrace \right] \right)\\
   &=\mathbf{P} \left( [0\leadsto H_{i} -e(v_j^s)]  \right)
  \end{align*}
 
 \item Let $[0\leadsto x]$ the event of having an increasing path from $0$ to $x$. Then 
  \begin{align*}
   \mathbf{P}\left([0\leadsto H_i-e(v)] \big| \Gamma_1\right)
   &\geq \mathbf{P}\left([0\leadsto H_i-e(v)] \big| [0\leadsto x],w(x)<w(v^s_j) \right)\\
   & = \mathbf{P}\left([0\leadsto H_{i+1} -\lbrace e'(v_j^s),T^Q\rbrace] \big| [0\leadsto x],w(x)<w(v^{s\cup Q}_j) \right)\\
   & =\mathbf{P}\left([0\leadsto H_{i+1} -\lbrace e'(v_j^s),T^Q\rbrace] \big| \Gamma_2 \right)\\
   & =\mathbf{P}\left([0\leadsto H_{i+1} - e'(v_j^s)] \big| \Gamma_2 \right)
  \end{align*}

 \item  $\theta_1(H_{i+1})-\theta_1(H_{i})\geq 0$. 
 \begin{align*}
 &\theta_1(H_{i+1})-\theta_1(H_{i})= \mathbf{P} \left( [0\leadsto H_{i+1}] \right) - \mathbf{P} \left( [0\leadsto H_{i}] \right) \\
 &=\mathbf{P} \left( [0\leadsto H_{i+1}]\cap [0\leadsto H_{i+1} -e'(v)] \right)  -\mathbf{P} \left( [0\leadsto H_{i}]\cap [0\leadsto H_{i} -e(v)]  \right)\\
 &+\mathbf{P} \left( [0\leadsto H_{i+1}]\cap [0\not\leadsto H_{i+1} -e'(v)] \right)
 - \mathbf{P} \left( [0\leadsto H_{i}]\cap [0\not\leadsto H_{i} -e(v)]  \right)\\
 &=\mathbf{P} \left(  [0\leadsto H_{i+1} -e'(v)] \right)  
 -\mathbf{P} \left( [0\leadsto H_{i} -e(v)]  \right)\\
 &+\mathbf{P} \left( \Gamma_2\cap [0\not\leadsto H_{i+1} -e'(v)] \right)
 - \mathbf{P} \left( \Gamma_1\cap [0\not\leadsto H_{i} -e(v)]  \right)\\
 &=\mathbf{P} \left( \Gamma_2 \right) -\mathbf{P} \left( \Gamma_2\cap [0\leadsto H_{i+1} -e'(v)] \right) \\
 &- \mathbf{P} \left( \Gamma_1 \right) +\mathbf{P} \left( \Gamma_1\cap [0\leadsto H_{i} -e(v)]  \right)\\
 &=\mathbf{P} \left( \Gamma_1 \right)\left[ 
 \mathbf{P} \left( [0\leadsto H_{i} -e(v)] \big| \Gamma_1 \right) 
 -\mathbf{P} \left( [0\leadsto H_{i+1}-e'(v) ] \big| \Gamma_2 \right)  
 \right] \geq 0
 \end{align*}
 \end{itemize}
 
\end{proof}

\begin{lem}
	 $\theta_1(T')\geq \theta_1(H^k)$
\end{lem}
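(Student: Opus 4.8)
The plan is to interpose one intermediate tree between $H^k$ and $T'$. Put $D:=\lfloor h/k\rfloor$ and let $\widetilde H$ be the truncation of $H^k$ at depth $D$: keep every vertex of $H^k$ at distance at most $D$ from the root, keep the edges among them, and declare every vertex at distance exactly $D$ to be a leaf. I will establish the two inequalities
\[\theta_1(H^k)\le\theta_1(\widetilde H)\le\theta_1(T'),\]
whose composition is the assertion of the lemma.

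For the left inequality I work on the probability space of $H^k$, giving $\widetilde H$ the labels it inherits as a subgraph. The geometric input is that every leaf of $H^k$ sits at depth at least $h/k$: a leaf of $H^k$ corresponds to a path in $T^k$ ending at a leaf of $T$, that is, at a vertex of $T$ at distance $h$ from the root, and since each edge of $T^k$ increases the distance in $T$ by at most $k$, such a path uses at least $\lceil h/k\rceil\ge D$ edges. In particular $\widetilde H$ has no leaf at depth below $D$, so all its leaves lie at depth exactly $D$. Consequently an increasing root-to-leaf path of $H^k$, truncated at depth $D$, is an increasing path from the root to a depth-$D$ vertex, i.e. to a leaf of $\widetilde H$. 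Thus $[0\leadsto H^k]\subseteq[0\leadsto\widetilde H]$ and $\theta_1(H^k)\le\theta_1(\widetilde H)$.

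For the right inequality I bound the branching of $H^k$. By the definition of the sons of a vertex $v^s$, these are in bijection with the descendants of $v$ in $T$ at distance $j$ for $1\le j\le k$; as $v$ has $f(h)^{j}$ such descendants, $v^s$ has at most $B:=f(h)+f(h)^2+\cdots+f(h)^k$ sons. Hence $\widetilde H$ is a tree of height $D$, all of whose leaves are at depth $D$ and all of whose internal vertices have at most $B$ sons. The crux is to show that the hypothesis $f(h)\le\sqrt[k]{h/(ek)}-\Omega(h^{c})$ with $c>0$ forces $B\le g(h/k)=h/(ek)$ for $h$ large. Writing $a:=\sqrt[k]{h/(ek)}$, so that $a^{k}=h/(ek)$, and $\delta:=a-f(h)$, the hypothesis reads $\delta\ge\Omega(h^{c})$; the elementary inequality $(a-\delta)^{k}\le a^{k}-a^{k-1}\delta$ for $0\le\delta\le a$ then gives $f(h)^{k}\le h/(ek)-a^{k-1}\delta=h/(ek)-\Omega(h^{\,1-1/k+c})$, while the remaining terms contribute $f(h)+\cdots+f(h)^{k-1}=O(f(h)^{k-1})=O(h^{\,1-1/k})$. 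Because $c>0$, the subtracted term of order $h^{\,1-1/k+c}$ dominates the positive remainder of order $h^{\,1-1/k}$, so $B\le h/(ek)-\omega(1)\le\lfloor h/(ek)\rfloor$, the branching of $T'$, for $h$ large. This estimate is exactly where the hypothesis is used, and the requirement that the gap be a genuine power $\Omega(h^{c})$ with $c>0$ (rather than a constant) is what makes it go through; I expect this to be the main obstacle.

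Granting $B\le\lfloor h/(ek)\rfloor$, a rooted tree of height $D$ whose internal vertices have at most $\lfloor h/(ek)\rfloor$ sons and all of whose leaves are at depth $D$ embeds, preserving the root and all depths, as a subtree of $T'$, the complete $g(h/k)$-ary tree of height $D$. Coupling labels so that the copy of $\widetilde H$ inside $T'$ carries the labels of $\widetilde H$ and the remaining labels of $T'$ are independent with the same law, every increasing root-to-leaf path of $\widetilde H$ becomes an increasing path of $T'$ from the root to a depth-$D$ vertex, i.e. to a leaf of $T'$. Hence $[0\leadsto\widetilde H]\subseteq[0\leadsto T']$, so $\theta_1(\widetilde H)\le\theta_1(T')$, completing the chain $\theta_1(H^k)\le\theta_1(\widetilde H)\le\theta_1(T')$. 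Since the ultimate conclusion concerns $\lim_{h\to\infty}$, the floors in $D$ and in the branching of $T'$ are immaterial.
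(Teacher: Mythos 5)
Your proof is correct and takes essentially the same route as the paper's: truncate $H^k$ at depth $\lfloor h/k\rfloor$, bound the branching of each internal vertex by $\sum_{j=1}^{k}f(h)^{j}$, use the $\Omega(h^{c})$ gap to show this sum is below the branching $\approx h/(ek)$ of $T'$, and conclude by embedding the truncation into $T'$. The only differences are cosmetic: you prove the key estimate via $(a-\delta)^{k}\le a^{k}-a^{k-1}\delta$ where the paper substitutes $f(h)\le A-A^{x}$ and expands the geometric-series quotient, and you explicitly justify (via the leaf-depth argument) the truncation inequality $\theta_1(H^k)\le\theta_1(\widetilde H)$ that the paper merely asserts.
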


\begin{proof}
By definition we require to prove that: the probability of having a $1$-accessible path in $T'$, is an upper bound for the probability of having a $1$-accessible path in $H^k$. 
	
	Let $H'\subset H^k$ be the tree induced by the vertices of $H^k$ at distance at most $h/k$ to the root. Notice that $\theta_1(H') \geq \theta_1(H^k)$. Also notice that, if $v$ is a vertex of $H'$ at distance at most $h/k-1$ to the root then (for $h$ large) \[ \text{deg}_{H'}(v)=\sum_{j=1}^k \left\lfloor f(h) \right\rfloor ^j  \leq \sum_{j=1}^k f(h) ^j = \frac{f(h)^{k+1}-1}{f(h)-1}\]  
	We claim that $\text{deg}_{H'}(v)\leq g(\lfloor h/k \rfloor)$ (recall, $g$ was defined as $g(h)=\frac{h}{e}$), from which  $H'\subset T'$ and, as both trees have the same height, then $\theta_1(T') \geq \theta_1(H')$.
	
	Now, to finish this proof, we prove that the claim holds. Let $A=\sqrt[k]{h/(ek)}$. As $f(h)\leq  \sqrt[k]{h/(ek)}- \Omega(h^c)$ then for $c<x<1$ and $h$ large enough, $ f(h)\leq A-A^{x}$. It is enough to prove that 
	\[ \frac{\left(A-A^{x} \right)^{k+1} -1}{\left(A-A^{x} \right)-1}\leq A^k\]
	but it follows from that 
	\[\left(A-A^{x} \right)^{k+1} -1= A^{k+1}-(k+1)A^{k+x}+o(A^{k+x})\] and  
	\[ A^k\left(A-A^{x} -1 \right)= A^{k+1}-A^{k+x} +A^k.\]
\end{proof}

\noindent\textbf{Acknowledgments:} 
The authors are thankful to Ricardo Restrepo for helpful discussions.

%Thanks are due to the anonymous referees for their careful reading, criticism and suggestions which helped us to considerably improve the paper.

%%%%%%%%%%%%%%%%%%%%%%%%%%%%%%%%%%%%%%%%%%%%%%%%%%%%%%%%%%%%%%%%%%%%%%%%%%%
\bibliographystyle{plain} \bibliography{kpercolation}
%%%%%%%%%%%%%%%%%%%%%%%%%%%%%%%%%%%%%%%%%%%%%%%%%%%%%%%%%%%%%%%%%%%%%%%%%%%

\end{document}